\theoremstyle{plain}
\newtheorem{theorem}{Theorem}[section]
\newtheorem{lemma}[theorem]{Lemma}
\theoremstyle{definition}
\theoremstyle{remark}
\renewcommand{\thefootnote}{\arabic{footnote}}
\def\R{\mathbb R}% tap so thuc
\def\al{\alpha}% alpha
\def\om{\omega}% omega
\def\Om{\Omega}% Omega
\def\be{\beta}% beta
\def\ga{\gamma}% gamma
\def\de{\delta}% delta
\def\De{\Delta} % Delta
\def\lam{\lambda}% lambda
\def\vphi{\varphi}% varphi
\def\na{\nabla}% nabla
\def\pa{\partial}% dao ham rieng
\def\lt{\left}% trai
\def\rt{\right}% phai
\def\i0i{\int_0^\infty}
\def\B{\mathbb B}
\def\dvH{dv_{\mathbb H^n}}
\numberwithin{equation}{section}
\title{The sharp Hardy--Moser--Trudinger inequality in dimension $n$}
\author{Van Hoang Nguyen%\footnote{Institute of Research and Development, Duy Tan University, Da Nang, Vietnam}
%\footnote{Institut de Math\'ematiques de Toulouse, Universit\'e Paul Sabatier, 118 Route de Narbonne, 31062 Toulouse c\'edex 09, France.}
}
\begin{document}
\maketitle

%\begin{center}
%\emph{(To my wife and my daughter)\text}
%\end{center}

%% Classification and key words; note that the 2010 classification is used:

\renewcommand{\thefootnote}{}

\footnote{Email: \href{mailto: Van Hoang Nguyen <vanhoang0610@yahoo.com>}{vanhoang0610@yahoo.com}} %; \href{mailto: Van Hoang Nguyen <nguyenvanhoang14@duytan.edu.vn>}{nguyenvanhoang14@duytan.edu.vn}.}

\footnote{2010 \emph{Mathematics Subject Classification\text}: 26D10, 35A23, 46E35}

\footnote{\emph{Key words and phrases\text}: Moser--Trudinger inequality, Hardy inequality, Hardy--Moser--Trudinger inequality, sharp constant, Green functions}

\renewcommand{\thefootnote}{\arabic{footnote}}
\setcounter{footnote}{0}

\begin{abstract}
In this paper, we prove a Hardy--Moser--Trudinger inequality in the unit ball $\B^n$ in $\R^n$ which improves both the classical singular Moser--Trudinger inequality and the classical Hardy inequality at the same time. More precisely, we show that for any $\beta \in [0,n)$ there exists a constant $C>0$ depending only on $n$ and $\beta$ such that
\[
\sup_{u\in W^{1,n}_0(\B^n), \mathcal H(u) \leq 1}\int_{\B^n} e^{(1-\frac\be n)\alpha_n |u|^{\frac n{n-1}}} |x|^{-\beta} dx \leq C 
\]
where $\alpha_n = n \om_{n-1}^{\frac1{n-1}}$ with $\om_{n-1}$ being the surface area of the unit sphere $S^{n-1} = \pa B$, and
\[
\mathcal H(u) = \int_{\B^n} |\na u|^n dx -\lt(\frac{2(n-1)}n\rt)^n \int_{\B^n} \frac{|u|^n}{(1-|x|^2)^n} dx.
\]
This extends an inequality of Wang and Ye in dimension two to higher dimensions and to the singular case as well. The proof is based on the method of transplantation of Green's functions and without using the blow-up analysis method. As a consequence, we obtain a singular Moser--Trudinger inequality in the hyperbolic spaces which confirms affirmatively a conjecture by Mancini, Sandeep and Tintarev \cite[Conjecture $5.2$]{MST}. We also propose an inequality which extends the singular Hardy--Moser--Trudinger inequality to any bounded convex domain in $\R^n$ which is analogue of the conjecture of Wang and Ye in higher dimensions.
\end{abstract}

\section{Introduction}
It is well-known that the Sobolev embedding is a basic and important tool in many aspects of Mathematics such as Analysis, Geometry, Partial of Differential Equations, Calculus of Variations, etc. Let $\Om$ be a bounded domain in $\R^n$ with $n \geq 2$ and $p\in (1,\infty)$. We denote by $W^{1,p}_0(\Om)$ the usual first order Sobolev space on $\Om$ which is the completion of the space $C_0^\infty(\Om)$ under the Dirichlet norm $\|\na u\|_{L^p(\Om)} := \lt(\int_\Om |\na u|^p dx\rt)^{\frac1p}$, $u\in C_0^\infty(\Om)$. For $1< p< n$, we have the following well-known Sobolev inequality
\begin{equation}\label{eq:Sobolev}
C \lt(\int_\Om |u|^q dx\rt)^{\frac1q} \leq \|\na u\|_{L^p(\Om)},\quad u\in W^{1,p}_0(\Om)
\end{equation}
for any $1 \leq q < p^* = np/(n-p)$ where $C>0$ is a constant depending only on $n,p,q$ and $\Om$. In other words, we have the embedding $W^{1,p}_0(\Om) \hookrightarrow L^q(\Om)$ for $q\in [1, p^*]$. However, in the limit case $p =n$ (thus, $p^* =\infty$) the embedding $W^{1,n}_0(\Om) \hookrightarrow L^\infty(\Om)$ fails. In this situation, it was proved independently by Yudovi${\rm \check{c}}$ \cite{Y}, Poho${\rm \check{z}}$aev \cite{P}, and  Trudinger \cite{T} that $W_0^{1,n}(\Om)$ can be embedded into an Orlicz space $L_{\varphi_n}(\Om)$ generated by the Young function $\varphi_n(t) = e^{c |t|^{\frac n{n-1}}} -1$ for some $c >0$. Later, Moser \cite{M} sharpened this result by finding out the sharp exponent $c$. More precisely, we have the following Moser--Trudinger inequality
\begin{equation}\label{eq:classicalMT}
\sup_{u \in W_0^{1,n}(\Om), \|\na u\|_{L^n(\Om)} \leq 1} \int_\Om e^{\al |u|^{\frac n{n-1}}} dx < \infty,
\end{equation}
if and only if $\al \leq \al_n: = n \om_{n-1}^{\frac1{n-1}}$ where $\om_{n-1}$ denotes the surface area of the unit sphere in $\R^n$. 

The Moser--Trudinger inequality plays the role of the Sobolev inequality in the limit case with many applications in many branches of Mathematics such as Analysis, Geometry and Partial Differential Equations, especially in studying the quasi-linear equations with exponential growth nonlinearity. It has been become an interesting subject to study. In fact, there have been many generalizations of the Moser--Trudinger inequality in many directions (e.g., to higher order (or fractional order) Sobolev spaces \cite{A,Martinazzi}, to unbounded domain in $\R^n$ \cite{AT00,R,LR,LL}, to singular weighted case \cite{AdiSan,AY,Nguyen} or to Riemannian manifolds \cite{Yang,YSK,MS,AT,MST,NguyenMT,NgoNguyen2016}). In $2004$, Adimurthi and Druet improved the Moser--Trudinger inequality \eqref{eq:classicalMT} in dimension $2$ by replacing the integral $\int_\Om e^{\al |u|^{\frac n{n-1}}} dx \leq 1$ by $\int_\Om e^{\al (1+ \gamma \|u\|_{L^n(\Om)}^n)^{\frac1{n-1}}|u|^{\frac n{n-1}}} dx$ with $0\leq \gamma < \lam_1(\Om) := \inf\{\|\na u\|_{L^2(\Om)}^2\, :\, u\in H^1_0(\Om);\, \|u\|_{L^2(\Om)} =1\}$. A higher dimension version of the Moser--Trudinger inequality is spirit of Adimurthi and Druet was established by Yang \cite{Yang06}. Tintarev \cite{Tin} improve the inequality of Adimurthi and Druet (but still in dimension $2$) by replacing the condition $\|\na u\|_{L^2(\Om)} \leq 1$ by a weaker condition $\|\na u\|_{L^2(\Om)}^2 -\gamma \|u\|_{L^2(\Om)}^2 \leq 1$ with $0\leq \gamma < \lam_1(\Om)$. In \cite{NguyenT,NguyenMT}, the author extends the result of Tintarev to the higher dimension as well as to the case of the singular--Moser--Trudinger inequality, respectively. Among these generalization of the Moser--Trudinger inequality \eqref{eq:classicalMT}, let us quote the singular Moser--Trudinger inequlity due to Adimurthi and Sandeep \cite{AdiSan}: for any bounded domain $\Om \subset \R^n$ containing the origin in its interior and $\beta \in [0,n)$, it holds 
\begin{equation}\label{eq:classicalSMT}
\sup_{u \in W_0^{1,n}(\Om), \|\na u\|_{L^n(\Om)} \leq 1} \int_\Om e^{\al(1-\frac\be n) |u|^{\frac n{n-1}}} dx < \infty,
\end{equation}
if and only if $\al \leq \al_n: = n \om_{n-1}^{\frac1{n-1}}$.

Another important inequality in the unit ball is the Hardy inequality which asserts that
\begin{equation}\label{eq:HardyB}
\int_{\B^n} |\na u|^n dx \geq \lt(\frac{2(n-1)}n\rt)^n \int_{\B^n} \frac{|u|^n}{(1 -|x|^2)^n} dx,\quad u\in C_0^\infty(\B^n).
\end{equation}
The constant $(\frac{2(n-1)}n)^n$ is sharp and never attained. Furthermore, it was proved by Mancini, Sandeep and Tintarev (see \cite[Lemma $2.1$]{MST}) that for any $p \in (n, \infty)$ there exists a constant $S_{n,p} >0$ depending only on $n$ and $p$ such that
\begin{equation}\label{eq:HardySobolevB}
\int_{\B^n} |\na u|^n dx - \lt(\frac{2(n-1)}n\rt)^n \int_{\B^n} \frac{|u|^n}{(1 -|x|^2)^n} dx \geq S_{n,p} \lt(\int_{\B^n} \frac{|u|^p}{(1 -|x|^2)^n} dx\rt)^{\frac np},\quad u \in C_0^\infty(\B^n).
\end{equation}
Denote
\[
\mathcal H(u) = \int_{\B^n} |\na u|^n dx - \lt(\frac{2(n-1)}n\rt)^n \int_{\B^n} \frac{|u|^n}{(1 -|x|^2)^n} dx,\quad u \in C_0^\infty(\B^n).
\]
By \eqref{eq:HardySobolevB}, the functional $u \to \sqrt{\mathcal H(u)}$ defines a norm on $C_0^\infty(\B^2)$. Let $\mathcal H(\B^2)$ denote the completion of $C_0^\infty(\B^2)$ under this norm. We have that $H_0^1(\B^2)$ is a proper subspace of $\mathcal H(\B^2)$. Notice that when $n\geq 3$ we do know whether or not the functional $u \to \sqrt[n]{\mathcal H(u)}$ defines a norm on $C_0^\infty(\B^n)$. However, by the density, $\mathcal H$ is well-defined on $W_0^{1,n}(\B^n)$.

In \cite{WangYe}, Wang and Ye obtained another improvement of the Moser--Trudinger inequality \eqref{eq:classicalMT} in the unit disc $\B^2\subset \R^2$ which combines both the Moser--Trudinger inequality  \eqref{eq:classicalMT} and the Hardy inequality \eqref{eq:Hardy}. Their inequality states that
\begin{equation}\label{eq:WangYe}
\sup_{u\in \mathcal H(\B^2), \mathcal H(u) \leq 1} \int_{\B^2} e^{4\pi u^2} dx < \infty.
\end{equation}
The proof of \eqref{eq:WangYe} given in \cite{WangYe} is based on the blow-up analysis method which is now a standard method to study the problems of this type. We refer the readers to \cite{Li2001,Li2005,NguyenT,Nguyen,R,LR,AD,WangYe,W19b,YZ} and references therein for more details on this method. The Hardy--Moser--Trudinger inequality \eqref{eq:WangYe} is a special case of the inequality of Tintarev \cite{Tin} aforementioned in which $\mathcal H$ is replaced by the functional $\mathcal H_V(u) = \int_{\B^2} |\na u|^2dx - \int_{\B^2} V u^2 dx$ for some potential $V$ so that $\mathcal H_V$ satisfies a weak coercive condition. There have been a lot of generalizations of \eqref{eq:WangYe} (see \cite{YZ,LY,LLY,W19a,W19b,Hou}). It is very remarkable that the inequality \eqref{eq:WangYe} can be seen as the analogue of the Hardy--Sobolev--Maz'ya inequality in dimension two. Recall that the Hardy--Sobolev --Maz'ya inequality (see \cite[Section $2.1.6$, Corollary $3$]{Maz'ya}) says that there exists a constant $C >0$ such that for any $u \in W^{1,2}_0(\B^n)$ with $n >2$, it holds
\[
\int_{\B^n} |\na u|^2 dx - \int_{\B^n} \frac{|u|^2}{(1-|x|^2)^2} dx \geq C \lt(\int_{\B^n} |u|^{\frac {2n}{n-2}} dx\rt)^{\frac{n-2}n}.
\]
Moreover, let $C_n$ denote the best constant so that the above inequality holds. It is well known that $C_n < S_n$ and is attained if $n>4$ (see \cite{TT}), and $C_3 = S_3$ and is not attained (see \cite{BFL}) where $S_n$ is the best constant in the Sobolev inequality \eqref{eq:Sobolev} with $p=2$ and $q =2^*$. The $L_p$ version of the above inequality in the hyperbolic space was considered in \cite{NguyenHS} by the author.

A new proof of \eqref{eq:WangYe} without using the blow-up analysis method was recently given by the author \cite{Nguyennew}. This new proof is based on the transplantation of Green functions. This method was previously used by Flucher \cite{F} to prove the existence of maximizer for the Moser--Trudinger inequality in dimension two, and then was used by Lin \cite{Lin1996} in any dimension. It also was successfully applied to prove the existence of maximizers for the singular Moser--Trudinger inequality \cite{CR15,CR16,CNR}. Let us explain briefly on this method. We know that $G_{\B^2}(x) = -\frac1{2\pi} \ln |x|$ is the Green function of $-\De$ in $\B^2$ with pole at $0$ and Dirichlet boundary condition. It was proved in \cite{WangYe} that the equation $-\De u - \frac1{(1-|x|^2)^2} u = \de_0$ in the distribution sense has a unique radial solution $G \in \mathcal H(\B^2) + W^{1,p}_0(B_{1/2})$ where $B_r =\{x \in \R^2\, :\, |x|< r\}$. The function $G$ is strictly decreasing and has the decomposition $G = -\frac1{2\pi} \ln |x| + C_G + \psi(x)$ for some constant $C_G$, where $\psi \in C_{loc}^{1,\al}(\B^2)$ and $\psi(x) = O(|x|^{1+ \al})$ as $x \to 0$ for any $\al \in (0,1)$. Back to the new proof of \eqref{eq:WangYe}, by the rearrangement argument applied to hyperbolic space, we only have to prove it for radial functions, i.e., the functions depend only on $|x|$. By abusing notation, we write $u(r)$ for the value of $u(x)$ with $|x| =r$ and a radial function $u$. For a radial function $u \in \mathcal H(\B^2)$, we define the new radial function $v$ on $\B^2$ such that $u(x) = v(e^{-2\pi G(x)})$. The main computations in \cite{Nguyennew} implies that $v \in W_0^{1,2}(\B^2)$ and $\|\na v\|_{L^2(\B^2)}^2 \leq \mathcal H(u)$, and 
\[
\int_{\B^2} e^{4\pi u^2} dx \leq e^{4\pi C_G} \int_{\B^2} e^{4\pi v^2}dx
\]
where $C_G$ appears in the decomposition of the Green function $G$ above. Then the inequality \eqref{eq:WangYe} follows from the classical Moser--Trudinger \eqref{eq:classicalMT} in $\B^2$.

The Moser--Trudinger inequality in the hyperbolic spaces was established by Mancini and Sandeep \cite{MS} (see also \cite{AT} by Adimurthi and Tintarev). In \cite{MST}, by using the inequality \eqref{eq:WangYe}, Mancini, Sandeep and Titarev have established the following Moser--Trudinger inequality in the hyperbolic spaces $\mathbb H^2$ %which we use the Poincar\'e ball model with the Riemannian metric $g(x) = (\frac2{1-|x|^2})^2 (dx_1^2 + dx_2^2)$,
\begin{equation}\label{eq:MTonhyperB2}
\sup_{u\in \mathcal H(\B^2), \mathcal H(u) \leq 1} \int_{\B^2} \frac{e^{4\pi u^2} -1 -4\pi u^2}{(1- |x|^2)^2} dx < \infty.
\end{equation}
In fact, it was show in in \cite{LY16} that the inequalities \eqref{eq:WangYe} and \eqref{eq:MTonhyperB2} are equivalent as well. The higher dimension version of \eqref{eq:MTonhyperB2} was conjectured in \cite{MST} (see the Conjecture $5.2$) as follows
\begin{equation}\label{eq:HMTconj}
\sup_{u\in C_0^\infty(\B^n), \mathcal H(u) \leq 1}\int_{\B^n} \frac{e^{\al_n |u|^{\frac n{n-1}}} -P_{n-1}(\al_n |u|^{\frac n{n-1}})}{(1 -|x|^2)^n} dx < \infty
\end{equation}
where $P_k(t) = e^t -\sum_{i=0}^{k} \frac{t^k}{k!}, t\geq 0, k\geq 0$. It also was shown in \cite{MST} that
\begin{equation*}%\label{eq:HMTinfty}
\sup_{u\in C_0^\infty(\B^n), \mathcal H(u) \leq 1}\int_{\B^n} \frac{e^{\al_n |u|^{\frac n{n-1}}} -P_{n-2}(\al_n |u|^{\frac n{n-1}})}{(1 -|x|^2)^n} dx =\infty.
\end{equation*}

The original motivation of this paper is to prove the conjectured inequality \eqref{eq:HMTconj}. In fact, we shall establish a singular Moser--Trudinger inequality in hyperbolic spaces which is more general than \eqref{eq:HMTconj} (see Theorem \ref{MST} below). In order to prove the conjectured inequality \eqref{eq:HMTconj}, we will prove the following singular Hardy--Moser--Trudinger inequality in the unit ball $\B^n$ which is the first main result in this paper.

\begin{theorem}\label{MAIN}
Let $n \geq 3$ and $0\leq \beta < n$, then there exists a constant $C(n,\beta)$ depending only on $n$ and $\beta$ such that
\begin{equation}\label{eq:SHMT}
\sup_{u\in W^{1,n}_0(\B^n), \mathcal H(u) \leq 1}\int_{\B^n} e^{(1-\frac\be n)\alpha_n |u|^{\frac n{n-1}}} |x|^{-\beta} dx \leq C(n,\beta).
\end{equation}
\end{theorem}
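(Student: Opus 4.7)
The plan is to extend the Green's function transplantation introduced by the author in \cite{Nguyennew} for $n=2$ to arbitrary dimension $n\ge 3$ and to the singular weight $|x|^{-\beta}$. First I reduce to the case when $u$ is radial, non-negative and non-increasing in $|x|$. Hyperbolic (Polya--Szego) symmetric decreasing rearrangement on $\mathbb H^n=(\B^n,g)$ decreases the $n$-Dirichlet energy $\int_{\B^n}|\nabla u|^n\,dx$, which in dimension $n$ coincides with the conformally invariant hyperbolic $n$-energy, while preserving the $L^n(\mathbb H^n)$-norm, which equals $2^n\int_{\B^n}|u|^n(1-|x|^2)^{-n}\,dx$; hence $\mathcal H(u^*)\le\mathcal H(u)$ for the hyperbolic rearrangement $u^*$. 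Since the weight $|x|^{-\beta}$ is itself radially non-increasing, the Hardy--Littlewood rearrangement inequality gives
\[
\int_{\B^n}e^{(1-\beta/n)\alpha_n|u|^{n/(n-1)}}|x|^{-\beta}\,dx\le\int_{\B^n}e^{(1-\beta/n)\alpha_n|u^*|^{n/(n-1)}}|x|^{-\beta}\,dx,
\]
so it suffices to establish \eqref{eq:SHMT} for radial, non-negative, non-increasing $u$.

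The key auxiliary object is a radial function $G:(0,1]\to[0,\infty)$ solving, on $(0,1)$,
\[
-\bigl(r^{n-1}|G'|^{n-2}G'\bigr)' - \Bigl(\tfrac{2(n-1)}{n}\Bigr)^n\frac{r^{n-1}G^{n-1}}{(1-r^2)^n}=0,\qquad G(1)=0,
\]
normalized so that $r^{n-1}|G'(r)|^{n-2}G'(r)\to -n/\alpha_n$ as $r\to 0^+$. This is exactly the normalization making $-\frac{n}{\alpha_n}\ln r$ the fundamental solution of the pure $n$-Laplacian in $\R^n$, and the critical coefficient $(2(n-1)/n)^n$ is precisely the threshold that preserves this logarithmic leading order. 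Existence, positivity and monotonicity of $G$ follow from a shooting argument for the nonlinear ODE, and standard regularity theory then yields the expansion
\[
G(r)=-\tfrac{n}{\alpha_n}\ln r+C_G+\psi(r),\qquad \psi(r)=O(r^\gamma)\ \text{as}\ r\to 0^+,
\]
for some $\gamma>0$ and a computable constant $C_G=C_G(n)$.

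Given radial non-increasing $u$, I define the radial function $v$ on $\B^n$ via $u(r)=v(s)$ with $s=\exp\bigl(-(\alpha_n/n)G(r)\bigr)$, so that $s:[0,1]\to[0,1]$ is strictly increasing with $s(0)=0$ and $s(1)=1$. A chain-rule computation combined with an integration by parts against the ODE for $G$ produces the crucial transplantation inequality
\[
\int_{\B^n}|\nabla v|^n\,dy\le\mathcal H(u),
\]
while the asymptotic $s\sim r\,e^{-(\alpha_n/n)C_G}$ near $r=0$ yields
\[
\int_{\B^n}e^{(1-\beta/n)\alpha_n|u|^{n/(n-1)}}|x|^{-\beta}\,dx\le K(n,\beta,C_G)\int_{\B^n}e^{(1-\beta/n)\alpha_n|v|^{n/(n-1)}}|y|^{-\beta}\,dy
\]
for an explicit finite constant $K$. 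Applying the sharp singular Moser--Trudinger inequality \eqref{eq:classicalSMT} of Adimurthi--Sandeep to $v$, whose $n$-Dirichlet norm is at most one, concludes the proof. The main obstacle, absent in the linear two-dimensional case, is the ODE analysis: one must handle a genuinely quasilinear radial equation with a critical Hardy potential, show that the leading logarithmic coefficient is exactly $n/\alpha_n$, and pin down the constant $C_G$ that enters the final estimate through $K$; verifying that the transplantation identity is sharp enough to retain the critical exponent $\alpha_n$ (rather than any smaller constant) is the second delicate point.
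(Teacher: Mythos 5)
Your outline coincides step for step with the paper's actual proof: hyperbolic rearrangement to reduce to radial non-increasing $u$, a radial Green function $G$ of $-\De_n -\lt(\frac{2(n-1)}n\rt)^n(1-|x|^2)^{-n}$ with the expansion $G=-\om_{n-1}^{-1/(n-1)}\ln r+C_G+\psi$, the transplantation $u(r)=v\big(e^{-\om_{n-1}^{1/(n-1)}G(r)}\big)$, the energy comparison $\int_{\B^n}|\na v|^n\,dx\le \mathcal H(u)$, and finally the Adimurthi--Sandeep inequality applied to $v$. However, the two steps you explicitly defer are exactly the crux in dimension $n\ge3$, and the substitutes you propose do not work as stated. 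For the construction of $G$: a shooting argument is problematic because the critical Hardy potential forces the degenerate boundary behavior $G(r)\le C(1-r^2)^{(n-1)/n}$, so $G'(r)\to-\infty$ as $r\to1^-$ and shooting from the boundary with Cauchy data is meaningless; shooting outward from $r=0$ with the prescribed logarithmic singularity requires proving that the solution stays positive on all of $(0,1)$ and vanishes exactly at $r=1$, which is precisely the criticality/minimal-growth statement the paper extracts from the Pinchover--Tintarev theory of the $p$-Laplacian with potential term (\cite[Theorems 5.4 and 5.5]{PT}), combined with a comparison against the explicit supersolution $(-\ln|x|)^{(n-1)/n}$. Note also that your two normalizations are mutually inconsistent for $n\ge3$: the flux condition should read $r^{n-1}|G'|^{n-2}G'\to-(n/\alpha_n)^{n-1}=-1/\om_{n-1}$, not $-n/\alpha_n$; only the former is compatible with the expansion $G=-\frac n{\alpha_n}\ln r+C_G+\psi$ and hence with retaining the sharp exponent $\alpha_n$.

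For the energy comparison: when $n\ge3$ this is not a ``chain rule plus integration by parts'' identity. The change of variables gives
\begin{equation*}
\int_{\B^n}|\na u|^n\,dx=\int_{\B^n}|\na v|^n\,dx+\om_{n-1}\int_0^1|v'(t)|^nt^{n-1}\Phi(t)\,dt,\qquad \Phi(t)=\om_{n-1}|G'(a(t))|^{n-1}a(t)^{n-1}-1>0,
\end{equation*}
and one must then prove the weighted one-dimensional Hardy inequality $\int_0^1|v'|^nt^{n-1}\Phi\,dt\ge\int_0^1v^n\Phi'(t)(-\ln t)^{1-n}\,dt$ in order to absorb the transplanted potential term. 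In the paper this requires writing $v=w\,(-\ln t)$ and invoking the pointwise convexity inequality $|a-b|^n\ge|b|^n-nb^{n-1}a+|a|^n$ for $b\ge0$, $a\le b$ (together with the monotonicity $v'\le0$) before integrating by parts; for $n=2$ this reduces to expanding a square, but for $n\ge3$ it is a genuine additional lemma. Without these two ingredients your argument does not close, even though the overall strategy is the right one.
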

Obviously, the inequality \eqref{eq:SHMT} is stronger than the singular Moser--Trudinger inequality \eqref{eq:classicalSMT} in $\B^n$. Furthermore, it combines both the singular Moser--Trudinger inequality \eqref{eq:classicalSMT} and the Hardy inequality \eqref{eq:HardyB}. In the dimension two, the inequality \eqref{eq:SHMT} was recently proved by Wang \cite{W19b} by using the blow-up analysis method following the lines in the proof of Wang and Ye \cite{WangYe}. Our proof of \eqref{eq:SHMT} is completely different with their proofs. In fact, we follow the arguments in \cite{Nguyennew} in which the new proof of \eqref{eq:WangYe} is provided. The main feature in the proof is the existence of a Green function $G$ which is the weak solution of the equation 
\[
-\De_n G - \lt(\frac{2(n-1)}n\rt)^n \frac{G^{n-1}}{(1 -|x|^2)^n} = \de_0
\]
in $\B^n$ in the distribution sense, where $\De_n G = \text{\rm div} (|\na G|^{n-2} \na G)$ is the $n-$Laplace operator. The existence of $G$ follows from the deep results of Pinchover and Tintarev concerning to the $p-$Laplacian problems \cite{PT}. Some important properties of $G$ are given in Lemma \ref{Gfunction} below. It should be notice here that our approach can be applied to prove a more general class of the improvements of the singular Moser--Trudinger inequality \eqref{eq:classicalSMT} in $\B^n$ by replacing $\mathcal H(u)$ by $\mathcal H_V(u) = \|\na u\|_{L^n(\B^n)}^n - \int_{\B^n} V |u|^n dx$ with the potential $V$ satisfying some suitable condition. The details of this fact will be mentioned in the remark at the end of this paper.

As a consequence of Theorem \ref{MAIN}, we obtain the following singular Moser--Trudinger inequality in the hyperbolic spaces $\mathbb H^n$ which confirms affirmatively the inequality \eqref{eq:HMTconj} of Mancini, Sandeep and Tintarev.
\begin{theorem}\label{MST}
Let $n\geq 2$ and $0\leq \beta < n$, then there exists a constant $\tilde C(n,\beta)$ depending only on $n$ and $\beta$ such that
\begin{equation}\label{eq:MST}
\sup_{u\in W_0^{1,n}(\B^n), \mathcal H(u) \leq 1} \int_{\B^n} \frac{e^{(1-\frac\be n) \al_n |u|^{\frac n{n-1}}} - P_{n-1}\Big(\Big(1-\frac\be n\Big)\al_n |u|^{\frac n{n-1}}\Big)}{(1 -|x|^2)^n} |x|^{-\beta} dx < \tilde C(n,\beta).
\end{equation}

\end{theorem}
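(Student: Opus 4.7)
My plan is to deduce Theorem~\ref{MST} from Theorem~\ref{MAIN} by combining an elementary Taylor remainder estimate with Hölder's inequality and the Hardy--Sobolev inequality~\eqref{eq:HardySobolevB}. The pointwise bound
\[
0 \leq e^t - P_{n-1}(t) = \sum_{k\geq n}\frac{t^k}{k!} \leq \frac{t^n}{n!}\,e^t,\qquad t\geq 0,
\]
applied with $t = (1-\beta/n)\alpha_n|u|^{n/(n-1)}$ reduces the theorem to showing that
\[
J(u) := \int_{\B^n}\frac{|u|^{n^2/(n-1)}\,e^{(1-\beta/n)\alpha_n|u|^{n/(n-1)}}\,|x|^{-\beta}}{(1-|x|^2)^n}\,dx
\]
is uniformly bounded for $u\in W^{1,n}_0(\B^n)$ with $\mathcal H(u)\leq 1$.

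To control $J(u)$ I would apply Hölder's inequality with conjugate exponents $r,r'>1$,
\[
J(u) \leq \Bigl(\int_{\B^n}\frac{|u|^{n^2r/(n-1)}}{(1-|x|^2)^{nr}}\,dx\Bigr)^{1/r}\Bigl(\int_{\B^n}e^{r'(1-\beta/n)\alpha_n|u|^{n/(n-1)}}|x|^{-r'\beta}\,dx\Bigr)^{1/r'}.
\]
The first factor is handled by a secondary Hölder inequality against $(1-|x|^2)^{-n(r-1)s'}$, which is integrable on $\B^n$ provided $n(r-1)s'<1$, followed by \eqref{eq:HardySobolevB} applied with exponent $p=n^2rs/(n-1)>n$; this yields a bound depending only on $n,r,s$ once $r$ is chosen close enough to $1$. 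For the second factor I select $r'<n/(n-\beta)$ and set $\widetilde\beta := n-r'(n-\beta) \in [0,n)$, so that $r'(1-\beta/n) = 1-\widetilde\beta/n$; Theorem~\ref{MAIN} applied with parameter $\widetilde\beta$ then controls the exponential, and the remaining weight discrepancy $|x|^{-r'\beta+\widetilde\beta} = |x|^{-n(r'-1)}$, being integrable near the origin since $n(r'-1)<n$, is absorbed by one further Hölder step.

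The principal technical obstacle is the factor $(1-|x|^2)^{-n}$, which has no counterpart in Theorem~\ref{MAIN} and forces a delicate balance between the two constraints: $r$ close to $1$ (so that the auxiliary singularity $(1-|x|^2)^{-n(r-1)}$ is integrable) and $r'<n/(n-\beta)$ (so that Theorem~\ref{MAIN} is applicable). These two requirements become easiest to reconcile when $\beta$ is bounded away from $0$; the endpoint case $\beta=0$, where the admissible range for $r'$ degenerates, is the most delicate and needs a separate treatment. I would handle it by a dyadic splitting $\B^n = B_{1/2}\cup(\B^n\setminus B_{1/2})$: on $B_{1/2}$ the factor $(1-|x|^2)^{-n}$ is bounded so Theorem~\ref{MAIN} applies directly, while on the outer annulus the estimate follows by applying \eqref{eq:HardySobolevB} termwise to the Taylor expansion $e^t-P_{n-1}(t) = \sum_{k\geq n}t^k/k!$ and summing, since $\frac{((1-\beta/n)\alpha_n)^k}{k!}$ decays fast enough to dominate the growth of the Hardy--Sobolev constants at exponents $p=nk/(n-1)$.
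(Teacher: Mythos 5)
There is a genuine gap, and in fact the main route you propose cannot work. The second factor in your principal H\"older step,
\[
\Bigl(\int_{\B^n}e^{r'(1-\beta/n)\alpha_n|u|^{\frac n{n-1}}}|x|^{-r'\beta}\,dx\Bigr)^{1/r'},
\]
is \emph{not} uniformly bounded for any $r'>1$: the singular Moser--Trudinger inequality is critical in the pair (exponent coefficient, weight power), and for the weight $|x|^{-r'\beta}$ the critical coefficient is $(1-r'\beta/n)\alpha_n$, whereas your coefficient is $r'(1-\beta/n)\alpha_n$; the inequality $r'(1-\beta/n)\le 1-r'\beta/n$ forces $r'\le 1$. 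So the supremum of this factor over $\{\|\na u\|_{L^n}\le 1\}\subseteq\{\mathcal H(u)\le 1\}$ is already $+\infty$ for every $\beta\in[0,n)$, not only at $\beta=0$; the same criticality defeats the ``one further H\"older step'' meant to absorb $|x|^{-n(r'-1)}$. The first factor is also problematic: writing $(1-|x|^2)^{-nr}=(1-|x|^2)^{-n/s}\cdot(1-|x|^2)^{-(nr-n/s)}$ so as to apply \eqref{eq:HardySobolevB} to the $s$-power, the complementary exponent is $(nr-n/s)s'=n(r-1)s'+n\ge n>1$, so the auxiliary boundary weight is never integrable; H\"older against $L^p((1-|x|^2)^{-n}dx)$ moments alone cannot beat the extra singularity $(1-|x|^2)^{-n(r-1)}$.

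Your fallback for $\beta=0$ --- splitting at $|x|=1/2$, using Theorem \ref{MAIN} on $B_{1/2}$ where $(1-|x|^2)^{-n}$ is bounded, and treating the annulus separately --- is in fact the paper's proof, and it is needed for \emph{all} $\beta$, not just $\beta=0$. But your treatment of the annulus is incomplete: summing the termwise bounds $\int_{\B^n}|u|^{nk/(n-1)}(1-|x|^2)^{-n}dx\le S_{n,nk/(n-1)}^{-k/(n-1)}$ against $\alpha_n^k/k!$ requires a quantitative estimate on the growth of $S_{n,p}^{-1}$ as $p\to\infty$, which you do not supply and which is essentially equivalent to the exponential integrability you are trying to prove. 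The paper avoids this entirely: after reducing to radial nonincreasing $u$, it invokes the pointwise decay $u(r)\le C_{n,p}(1-r^2)^{(n-1)/p}$ for $r\in(1/2,1)$ (a consequence of \eqref{eq:HardySobolevB} and monotonicity, \cite[Lemma 5.3]{MST}), so that on the annulus $u$ is bounded and $e^t-P_{n-1}(t)=O(t^n)$ gives a bound $\tilde C(1-r^2)^{n^2/p}$; choosing $n<p<n^2/(n-1)$ makes $(1-|x|^2)^{n^2/p-n}$ integrable and finishes the proof. To repair your argument, replace the H\"older machinery by this splitting-plus-pointwise-decay step.
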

Evidently, when $\beta =0$, the inequality \eqref{eq:MST} is exactly the inequality \eqref{eq:HMTconj}. Theorem \ref{MST} hence not only confirms affirmatively the inequality \eqref{eq:HMTconj} but also extends this inequality to the singular case $0 < \beta < n$.

It is an interesting question on the extremal functions for the Moser--Trudinger inequality. The existence of extremals for the Moser--Trudinger inequality was first proved by Carleson and Chang \cite{CC} when $\Om = \B^n$ (Another proof of this result was given in \cite{DeFOR}). Later, this existence result was proved for any domain in $\R^2$ by Flucher \cite{F} and for any domain in $\R^n$ by Lin \cite{Lin1996}. Notice that the method used in \cite{F,Lin1996} is based on the transplantation of Green functions. This method was successfully applied in \cite{CR15,CR16,CNR} to prove the existence of extremals for the singular Moser--Trudinger inequality. For the improved Moser--Trudinger inequality, the existence of extremals was proved in \cite{NguyenT,Nguyen,YZsingular} and the references therein. In \cite{Li2001,Li2005}, Li developed a blow-up analysis method to establish the existence of extremals for the Moser--Trudinger inequality on Riemannian manifolds. Concerning to the Hardy--Moser--Trudinger inequality, it was proved by Wang and Ye \cite{WangYe} (by using the blow-up analysis method) that the extremals for the inequality \eqref{eq:WangYe} exists in $\mathcal H(\B^2)$ but not in $W^{1,2}_0(\B^2)$. Similarly, again by the the blow-up analysis method, Yang and Zhu proved the existence of extremals for the improvement version of \eqref{eq:WangYe} and Wang proved the existence of the singular Hardy--Moser--Trudinger inequality \eqref{eq:SHMT} in $\B^2$. It remains an open question in this paper which is whether or not the extremals for the singular Hardy--Moser--Trudinger inequality \eqref{eq:SHMT} exists when $n \geq 3$. The main difficult is to determine the suitable space for which the extremals (if exist) belong to. Let us recall that when $n \geq 3$, we do not know the functional $u \to \mathcal H(u)^{\frac1n}$ is a norm on $C_0^\infty(\B^n)$ or not. So we can't talk about the completion of $C_0^\infty(\B^n)$ under this functional also the weak convergence with respect to this functional. This is the crucial different with the case $n=2$. We will come back this question in the future research.

As a final remark, it is well known that for a convex domain domain $\Om \subset \R^n$ the following Hardy's inequality holds (see, e.g., \cite{Marcus,Mat})
\[
\int_\Om |\na u|^n dx \geq \lt(\frac{n-1}n\rt)^n \int_\Om \frac{|u|^n}{d(x,\pa \Om)^n} dx,\quad u\in C_0^\infty(\Om),
\]
where $d(x,\pa \Om) = \inf \{|x-y|\, :\, y\in \pa \Om$. The constant $(n-1)^n/n^n$ is sharp and never attained. Hence,
\[
H_\Om(u) = \int_\Om |\na u|^n dx - \lt(\frac{n-1}n\rt)^n \int_\Om \frac{|u|^n}{d(x,\pa \Om)^n} dx >0,\quad u \in C_0^\infty(\Om) \setminus\{0\}.
\]
We wonder if the inequality \eqref{eq:SHMT} can be extended to any convex domain $\Om$ in $\R^n$. In this direction, we propose the following inequality
\begin{equation}\label{eq:conj}
\sup_{u\in C_0^\infty(\Om), H_\Om(u) \leq 1} \int_\Om e^{\al_n(1-\frac\be n) |u|^{\frac n{n-1}}} |x|^{-\beta} dx < \infty.
\end{equation}
Since $d(x,\pa \B^n) = 1 -|x| \geq \frac{1-|x|^2}2$, then the inequality \eqref{eq:conj} holds when $\Om = \B^n$ by \eqref{eq:SHMT}. In dimension two, the inequality \eqref{eq:conj} for $\beta =0$ was conjectured by Wang and Ye (see \cite[Conjecture, page $4$]{WangYe}) and was recently settled by Lu and Yang \cite{LY16}.

The rest of this paper is organized as follows. In the section \S2 we recall some facts on the rearrangement arguments in the hyperbolic space which enables us reducing the proof of \eqref{eq:SHMT} to the radial functions in $\B^n$. We also prove the existence of the Green function $G$ and its properties in this section. Finally, we define a transformation of functions (based on the transplantation of Green functions) and make some useful computations which is useful in the proof of\eqref{eq:SHMT} in subsection \S2.3. The proofs of \eqref{eq:SHMT} and \eqref{eq:MST} are given in the section \S3. We also make some further comments on the application of our method to obtain the other improvements of the singular Moser--Trudinger inequality in $\B^n$ concerning to the potential $V$.

\section{Preliminaries}
In this section, we recall some useful facts and make some crucial estimates which will be used in the proof of Theorem \ref{MAIN}. We first recall the rearrangement argument applied to the hyperbolic spaces.
\subsection{Reducing to the radial functions}
In this subsection, we consider the hyperbolic space $\mathbb H^n$ as the unit ball $\B^n$ equipped with the Riemannian metric
\[
g(x) = \lt(\frac2{1-|x|^2}\rt)^2 (dx_1^2 + dx_2^2 + \cdots + d x_n^2).
\]
The volume element and the gradient operator with respect to $g$ is given by $\dvH = \lt(\frac 2{1-|x|^2}\rt)^n dx$ and $\na_g = (\frac{1 -|x|^2}2)^2 \na$. This model of hyperbolic space is especially useful for questions involving rotational symmetry. The geodesic distance between $x$ and $0$ is given by $\rho(x) = \ln \frac{1+|x|}{1 -|x|}$ and we denote by $B_{\mathbb H^n}(0,r)$ the geodesic ball in $\mathbb H^n$ with center at $0$ and radius $r$, i.e.,
\[
B_{\mathbb H^n}(0,r) = \{x\in \mathbb H^n\, :\, \rho(x) < r\}.
\]
For a measurable subset $A\in \mathbb H^n$, we use the notation $v_{\mathbb H^n}(A) = \int_A \dvH.$ Let $u$ be a measurable function in $\mathbb H^n$ such that $v_{\mathbb H^n}(\{x\, :\, |u(x)|> t\}) < \infty$ for any $t >0$. The non-increasing rearrangement function of $u$, denoted by $u^*$, is defined as
\[
u^*(x) = \inf\{s>0\, :\, v_{\mathbb H^n}(\{x\, :\, |u(x)| >s\}) \leq v_{\mathbb H^n}(B_{\mathbb H^n}(0,\rho(x)))\}.
\]
From the definition, we have $\int_{\B^n} (u^*)^n \dvH = \int_{\B^n} |u|^n \dvH$. The well-known P\'olya--Sz\"ego principle in hyperbolic spaces \cite{Ba} says that if $u \in W_0^{1,n}(\B^n)$ then $u^* \in W^{1,n}_0(\B^n)$ and 
\[
\int_{\B^n} |\na u^*|^n dx = \int_{\B^n} |\na_g u^*|_g^n \dvH \leq \int_{\B^n} |\na_g u|_g^n \dvH = \int_{\B^n} |\na u|^n dx.
\]
Thus, $\mathcal H (u^*) \leq \mathcal H(u)$.

Furthermore, by the Hardy--Littlewood inequality (see \cite{Brock}), we have
\begin{align*}
\int_{\B^n} e^{(1-\frac\be n)\alpha_n |u|^{\frac n{n-1}}} |x|^{-\beta} dx&=2^{-n} \int_{\B^n} e^{(1-\frac\be n)\alpha_n |u|^{\frac n{n-1}}} |x|^{-\beta}(1-|x|^2)^n  \dvH\\
&\leq 2^{-n}\int_{\B^n} e^{(1-\frac\be n)\alpha_n |u^*|^{\frac n{n-1}}} |x|^{-\beta}(1-|x|^2)^n  \dvH\\
&=\int_{\B^n} e^{(1-\frac\be n)\alpha_n |u^*|^{\frac n{n-1}}} |x|^{-\beta} dx
\end{align*}
by noticing that the rearrangement of $|x|^{-\beta}(1 - |x|^2)^n$ is just itself. Therefore we only need to consider nonincreasing, radially symmetric functions in proving \eqref{eq:SHMT}. Let us define
\begin{equation*}%\label{eq:Sigmaset}
\Sigma =\{u \in C_0^\infty(\B^n)\, :\, u(x) = u(r)\quad\text{\rm with}\quad |x| =r;\, u'\leq 0\},
\end{equation*}
and $\mathcal H_1$ be the closure of $\Sigma$ in $W_0^{1,n}(\B^n)$. So, to prove Theorem \ref{MAIN}, we need only to show that there exists some constant $C(n,\beta)$ depending only on $n$ and $\beta$ such that
\begin{equation*}%\label{eq:enough}
\sup_{u\in \Sigma, \mathcal H(u) \leq 1}\int_{\B^n} e^{(1-\frac\be n)\alpha_n |u|^{\frac n{n-1}}} |x|^{-\beta} dx \leq C(n,\beta).
\end{equation*}

%In other hand, we have
%\[
%\int_{B^n} e^{\al_n |u|^{\frac{n}{n-1}}} dx = \om_{n-1} \int_0^\infty e^{\al_n e^{-t} v(t)^{\frac n{n-1}}} \frac{\tanh (t/2)^{n-1}}{2 \cosh(t/2)^2} dt.
%\]

\subsection{Existence of Green function and its properties}
Throughout this subsection, we denote by
\[
V(x) = \lt(\frac{2(n-1)}n\rt)^n \frac{1}{(1-|x|^2)^n},
\]
and $Q_V(u) = \mathcal H(u), u\in C_0^\infty(\B^n)$, i.e.,
\[
Q_V(u) = \int_{\B^n} |\na u|^n dx - \int_{\B^n} V(x) |u(x)|^n dx,\quad u\in C_0^\infty(\B^n).
\]
We have $Q_V \geq 0$ on $C_0^\infty(\B^n)$ by the Hardy inequality \eqref{eq:HardyB}. By \cite[Theorem $5.4$]{PT}, the equation $Q_V'(u) = 0$ has (up to a multiple constant) a unique positive solution $v$ in $\B^n\setminus \{0\}$ of minimal growth in a neighborhood of infinity in $\B^n$ (see \cite[Definition $5.3$]{PT} for the definition of positive solution of minimal growth in a neighborhood of infinity). Furthermore, $v$ is either a global minimal solution of the equation $Q_V'(u) =0$ in $\B^n$, or $v$ has a nonremovable singularity at $0$.

By the Hardy--Sobolev inequality \eqref{eq:HardySobolevB}, there exists a positive constant $C >0$ such that
\[
Q_V(u) \geq  C \int_{\B^n} |u|^n dx,\quad u\in C_0^\infty(\B^n).
\]
In terminology of \cite[Definition $1.3$]{PT}, the functional $Q_V$ has a weighted spectral gap in $\B^n$ (or $Q_V$ is strictly positive in $\B^n$). This fact together with \cite[Theorem $5.5$]{PT} implies that the solution $v$ of the equation $Q_V'(u) =0$ in $\B^n\setminus \{0\}$ above has a nonremovable singularity at $0$. By Lemma $5.1$ in \cite{PT}, we have
\[
\lim_{x\to 0} \frac{v(x)}{-\ln |x|} = C
\]
for some $C >0$. By normalizing, we assume this solution satisfies
\begin{equation}\label{eq:chuanhoa}
\lim_{x\to 0} \frac{v(x)}{-\ln |x|} = \om_{n-1}^{-\frac1{n-1}}.
\end{equation}
Let $G(x)$ denote such a solution $v$, and we call it the Green function of the equation $Q_V'(u) =0$ in $\B^n$ with a pole at $0$. It is not hard to see that $G$ is the weak solution of the equation
\[
-\De_n G - \lt(\frac{2(n-1)}n\rt)^n \frac{G^{n-1}}{(1-|x|^2)^n} = \de_0
\]
in the distribution sense in $\B^n$. We have the following results on $G$.

\begin{lemma}\label{Gfunction}
$G$ is radially symmetric and strictly decreasing in $|x|$. There exists $C >0$ such that 
\begin{equation}\label{eq:chanG}
G(x) \leq C (1- |x|^2)^{\frac{n-1}n},\quad \frac12 \leq |x| < 1.
\end{equation}
Furthermore, we have the following decomposition of $G$
\begin{equation}\label{eq:decompG}
G(x) = -\om_{n-1}^{-\frac 1{n-1}} \ln |x| + C_G + H(x),
\end{equation}
with $H \in C_{loc}^{1,\al}(B)$ and $H(r) = O(r^{1+ \al})$ as $r \to 0$ for any $\al \in (0,1)$.
\end{lemma}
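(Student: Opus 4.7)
The plan is to prove the three assertions in sequence---radial symmetry and strict monotonicity via uniqueness in \cite{PT} and the radial ODE; boundary decay via an explicit supersolution barrier; and the logarithmic expansion at the origin via direct ODE integration combined with standard Tolksdorf--DiBenedetto $C^{1,\al}$ regularity. For radial symmetry, the observation is that for any rotation $R\in O(n)$ the function $G\circ R$ is again a positive solution of $Q_V'(u)=0$ on $\B^n\setminus\{0\}$ of minimal growth at $\pa\B^n$, and it inherits the normalization \eqref{eq:chuanhoa} at $0$ because $V$ is rotationally invariant; the uniqueness up to a scalar from \cite[Theorem 5.4]{PT} then forces $G\circ R=G$. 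Writing $G=g(|x|)$ and setting $\psi(r):=r^{n-1}|g'(r)|^{n-2}g'(r)$, the PDE reduces to $\psi'(r)=-r^{n-1}V(r)g(r)^{n-1}$ on $(0,1)$, while the weak-formulation flux balance at the origin (or \eqref{eq:chuanhoa} together with the equation) yields $\psi(r)\to -\om_{n-1}^{-1}$ as $r\to 0^+$; the strict negativity of $\psi'$ then propagates $\psi<0$ to all of $(0,1)$, i.e.\ $g'<0$.

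For the boundary estimate \eqref{eq:chanG}, I would construct an explicit supersolution. A direct computation for $\phi(r):=(1-r^2)^{(n-1)/n}$ shows
\[
-\De_n\phi(r)=V(r)\phi(r)^{n-1}\cdot nr^{n-2}\Bigl(1-\frac{n-1}{n}r^2\Bigr),
\]
and the scalar factor equals $1$ at $r=1$ but strictly exceeds $1$ on some annulus $\{r_0\le r<1\}$. By the $(n-1)$-homogeneity of $-\De_n$, $A\phi$ is a supersolution of $-\De_n u=Vu^{n-1}$ on this annulus for every $A>0$. Choosing $A$ so that $A\phi(r_0)\ge g(r_0)$---finite by interior regularity of $n$-harmonic-type equations away from $0$---and using that $g(r)\to 0$ as $r\to 1^-$ (a consequence of the minimal-growth property of $G$ together with \eqref{eq:HardySobolevB} applied on the annulus), I would then conclude $g\le A\phi$ on $\{r_0\le r<1\}$. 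Because the standard pointwise maximum-principle comparison degenerates when the coefficient $V$ is independent of $u$, this step is best implemented via a Picone-type identity: with $A\phi$ as the reference, the strict inequality $-\De_n\phi>V\phi^{n-1}$ inside the annulus produces, after testing with $(g-A\phi)^+$ and integrating, a strict inequality that forces $(g-A\phi)^+\equiv 0$.

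For the decomposition \eqref{eq:decompG}, I would integrate the ODE. From $\psi'(r)=-r^{n-1}V(r)g(r)^{n-1}$ and $\psi(0^+)=-\om_{n-1}^{-1}$ one obtains
\[
-g'(r)=\frac{m(r)^{1/(n-1)}}{r},\qquad m(r):=\om_{n-1}^{-1}+\int_0^r s^{n-1}V(s)g(s)^{n-1}\,ds.
\]
Near the origin $V$ is bounded and $g(s)=O(|\ln s|)$, so the integrand in $m$ is $O(s^{n-1}|\ln s|^{n-1})$, which gives $m(r)=\om_{n-1}^{-1}+O(r^n|\ln r|^{n-1})$ and then $m(r)^{1/(n-1)}=\om_{n-1}^{-1/(n-1)}+O(r^n|\ln r|^{n-1})$. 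Integrating $-g'$ and absorbing the finite contribution into a constant $C_G$ yields $g(r)=-\om_{n-1}^{-1/(n-1)}\ln r+C_G+H(r)$ with $H(r)=O(r^n|\ln r|^{n-1})$, which is in particular $O(r^{1+\al})$ for every $\al\in(0,1)$ whenever $n\ge 2$. The $C^{1,\al}_{\mathrm{loc}}(\B^n)$ regularity of $H$ then follows from the Tolksdorf--DiBenedetto $C^{1,\al}$ theory for the $n$-Laplacian applied to $G$ on $\B^n\setminus\{0\}$ combined with this local expansion at the origin.

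The hardest step will be the boundary bound \eqref{eq:chanG}: because $V$ realises the critical Hardy scaling, the ratio $-\De_n\phi/(V\phi^{n-1})$ only barely exceeds $1$ near $\pa\B^n$, and the standard pointwise comparison for the $n$-Laplacian degenerates, forcing one into an integrated (Picone or energy) argument whose details must be handled with care. Radial symmetry, strict monotonicity, and the origin expansion, by contrast, reduce to uniqueness, an ODE sign argument, and an asymptotic integration, respectively, each essentially routine once the boundary decay is in hand.
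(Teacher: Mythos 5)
Your overall strategy coincides with the paper's: radial symmetry via the uniqueness in \cite[Theorem 5.4]{PT} applied to $G\circ R$; strict monotonicity from the radial flux identity $\big(r^{n-1}|g'|^{n-2}g'\big)'=-r^{n-1}Vg^{n-1}$ together with the normalization at $0$; and the expansion at the origin by integrating $-g'(r)=m(r)^{1/(n-1)}/r$ with $m(r)=\om_{n-1}^{-1}+\int_0^r s^{n-1}Vg^{n-1}ds$, which is literally the paper's computation (same remainder $O(r^n|\ln r|^{n-1})$, same appeal to Serrin--Tolksdorf regularity). The one place you genuinely diverge is the barrier for \eqref{eq:chanG}: you take $\phi=(1-r^2)^{(n-1)/n}$ and your identity $-\De_n\phi=V\phi^{n-1}\cdot nr^{n-2}(1-\tfrac{n-1}{n}r^2)$ is correct, with the factor exceeding $1$ on an annulus near $\pa\B^n$ (and the inner compact part handled trivially by enlarging $C$). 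The paper instead uses $\psi=(-\ln|x|)^{(n-1)/n}$, which is a strict supersolution on all of $\B^n\setminus\{0\}$ via the elementary inequality $-2r\ln r\le 1-r^2$, and then converts $(-\ln r)^{(n-1)/n}$ into $(1-r^2)^{(n-1)/n}$ at the end. Both barriers work; yours is more direct, the paper's avoids having to locate the annulus.

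The gap is in how you run the comparison. You propose to compare $A\phi$ with $G$ itself on the open annulus $\{r_0\le r<1\}$, testing with $(g-A\phi)^+$, and you feed in the claim that $g(r)\to 0$ as $r\to 1^-$ "by minimal growth and \eqref{eq:HardySobolevB}". That claim is not justified as stated -- pointwise boundary decay of $G$ is essentially part of what \eqref{eq:chanG} is asserting, and the Picone/energy argument additionally needs $(g-A\phi)^+$ to be an admissible test function with vanishing trace on $\pa\B^n$ and $\na g\in L^n$ up to the boundary, none of which you establish. The paper sidesteps all of this by comparing not with $G$ but with the exhaustion $G_N$ (solutions on $B_{1-1/N}\setminus\{0\}$ vanishing on $\pa B_{1-1/N}$, whose locally uniform limit is $G$): on the compact annulus $\{1/2\le|x|\le 1-1/N\}$ the boundary data are under control ($G_N\le C_{1/2}\le C\psi$ on the inner sphere, $G_N=0<C\psi$ on the outer one), the comparison principle of \cite[Theorem 2.2]{PT} or \cite[Theorem 5]{GS} applies directly, and one lets $N\to\infty$. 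Alternatively, the very definition of a positive solution of minimal growth in a neighborhood of infinity gives the comparison of $G$ with any positive supersolution dominating it on the inner sphere, so you could also close your argument by invoking that definition rather than the unproved boundary vanishing. Either repair makes your proof complete; as written, this step does not go through.
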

\begin{proof}
Since $V \in C^\infty(\B^n)$ and $G \in W^{1,n}_{loc}(\B^n\setminus\{0\})$, then by the standard regularity \cite{Serrin,Tol} we have $G \in C^1(\B^n \setminus \{0\})$. For any $R \in O(n)$ the group of the $n\times n$ orthogonal matrices. Denote $G_R(x) = G(Rx), x \in \B^n\setminus \{0\}$. It is easy to check that $G_R$ is a solution of the equation $Q_V'(u) = 0$ in $\B^n\setminus\{0\}$ and satisfies \eqref{eq:chuanhoa}. Hence $G_R \equiv G$ by the uniqueness. In other word, we have $G(Rx) = G(x)$ for any $R \in O(n)$. This implies that $G$ is radially symmetric in $|x|$.

By \eqref{eq:chuanhoa}, we have $G \in L^p_{loc}(\B^n,\dvH)$ for any $p < \infty$. For any $0< a < b < 1$, we chose $k_0 >0$ such that $k_0 a \geq 1$ and $k_0(1-b) >1$. For any $k\geq k_0$, we define the function
\[
\psi_k(x) = \begin{cases} 
0 &\mbox{if $0\leq |x| < a-\frac1k$ or $b+ \frac1k \leq |x| < 1$}\\
1-k(a-|x|) &\mbox{if $a- \frac1k \leq |x| < a$}\\
1 &\mbox{if $a \leq |x| < b$}\\
1-k(|x| -b) &\mbox{if $b\leq |x| < b+ \frac1k$.}
\end{cases}
\]
Testing the equation $Q_V'(G) = 0$ by $\psi_k$ and using the radially symmetric of $G$, we have
\[
\om_{n-1} \lt(k \int_{a-\frac1k}^a |G'(r)|^{n-2} G'(r) r^{n-1} dr - k \int_b^{b+ \frac1k} |G'(r)|^{n-2} G'(r) r^{n-1} dr\rt) = \int_{\B^n} V G^{N-1} \psi_k dx.
\]
Letting $k \to \infty$ and using the facts $G \in C^1(\B^n \setminus\{0\})$ and $G \in L^p_{loc}(\B^n,\dvH)$ for any $p< \infty$ and using the Lebesgue dominated convergence theorem, we get
\begin{equation}\label{eq:daoham1}
\om_{n-1} \lt(|G'(a)|^{N-2} G'(a) a^{N-1} - |G'(b)|^{N-2} G'(b) b^{N-1} \rt) = \int_{\{a \leq |x| < b\}} V G^{n-1} dx.
\end{equation}
From \eqref{eq:chuanhoa}, there exists a sequence $a_i \in (0,1)$ such that $a_i \to 0$ as $i \to \infty$ and $G'(a_i) < 0$. This fact together with \eqref{eq:daoham1} implies $G'(r) < 0$ for any $0< r < 1$. Hence $G$ is strictly decreasing in $|x|$.

We next prove \eqref{eq:chanG}. Let $B_{r} = \{x\, :\, |x| < r\}$ for $0< r < 1$. From the proof of Theorem $5.4$ in \cite{PT}, we see that $G$ is locally uniform limit in $\B^n \setminus\{0\}$ of the sequence $G_N$, $N\geq 2$ which solves the equation $Q_V'(G_N) = 0$ in $B_{1-\frac1N} \setminus \{0\}$ and satisfies the condition $G_N = 0$ on $\pa B_{1-\frac1N}$ and 
\[
\lim_{x\to 0} \frac{G_N(x)}{-\ln |x|} = \om_{n-1}^{-\frac1{n-1}}.
\]
Fix a $\de \in (0,1)$. Evidently, $G_N (x) \leq C_\de$ on $\pa B_\de$ for any $N\geq 2$ and for some $C_\de >0$ depending only on $\de$. Let $\psi(x) = (-\ln |x|)^{\frac{n-1}n}$. By a direct computation, we have
\[
-\De_n \psi (x) - V(x) \psi(x)^{\frac{n-1}n} =\lt(\frac{n-1}n\rt)^n \frac{\psi(x)^{n-1}}{|x|^n} \lt(\frac1{(-\ln |x|)^n }- \lt(\frac{2|x|}{1 -|x|^2}\rt)^n\rt).
\]
Using the elementary inequality
\[
-2r \ln r \leq 1 -r^2,\quad r\in (0,1),
\]
we obtain that 
\[
-\De_n \psi (x) - V(x) \psi(x)^{\frac{n-1}n} > 0,\quad x \in \B^n\setminus \{0\}.
\]
Notice that $\psi >0$ on $\pa B_{1-\frac1N}$. Furthermore, multiplying $\psi$ by a large constant $C$, we see that $C \psi \geq C_{1/2} \geq G_N$ on $\pa B_{1/2}$ for any $N$. Applying the comparison principle (see \cite[Theorem $2.2$]{PT} or \cite[Theorem $5$]{GS}), we have $G_N(x) \leq C \psi(x)$ for any $N$ and $\frac12 \leq |x| < 1$. Letting $N\to \infty$ we have 
\[
G(x) \leq C (-\ln |x|)^{\frac{n-1}n}  \leq \tilde C (1-|x|^2)^{\frac{n-1}n},\quad \frac12\leq |x| < 1.
\]
as wanted.

From \eqref{eq:daoham1}, we see that there exists 
\begin{equation}\label{eq:daoham2}
\lim_{a\to 0} |G'(a)|^{N-2} G'(a) a^{n-1} = |G'(b)|^{n-2} G'(b) b^{n-1} + \om_{n-1}^{-1} \int_{B_b} V G^{n-1} dx.
\end{equation}
Notice that $G' < 0$, hence there exists the limit
\[
\lim_{r\to 0} -G'(r) r = \gamma \geq 0.
\]
This limit together with \eqref{eq:chuanhoa} and L'H\^opital theorem implies $\ga = \om_{n-1}^{-\frac1{n-1}}$. Furthermore, we have from \eqref{eq:daoham2}
\[
|G'(b)|^{n-2} G'(b) b^{n-1} + \om_{n-1}^{-1} \int_{B_b} V G^{n-1} dx = -\ga^{n-1}, \quad\forall\, 0< b < 1,
\]
or equivalently,
\begin{equation}\label{eq:Gchan}
-\om_{n-1}^{\frac1{n-1}} G'(b) b = \lt(1+ \int_{B_b} V G^{n-1} dx\rt)^{\frac1{n-1}},\quad\forall\, 0< b < 1.
\end{equation}
Again, from \eqref{eq:daoham1}, we get
\[
-G'(r) r = \lt(\ga^{n-1} +\om_{n-1}^{-\frac1{n-1}} \int_{B_r} V G^{n-1} dx\rt)^{\frac1{n-1}} = \gamma + \psi(r),
\]
with
\[
\psi(r) = \lt(\ga^{n-1} +\om_{n-1}^{-\frac1{n-1}} \int_{B_r} V G^{n-1} dx\rt)^{\frac1{n-1}} - \gamma.
\]
From \eqref{eq:chuanhoa}, we have $\psi(r) = O((-\ln r)^{n-1} r^n)$ as $r\to 0$. Furthermore, we have $\psi \in C_{loc}^{1,\al}(B)$ for any $\al \in (0,1)$. Now, we have
\begin{equation}\label{eq:daoham3}
-G'(r) -\frac{\gamma}{r} = \frac{\psi(r)}r = O((-\ln r)^{n-1} r^{n-1})
\end{equation}
as $r \to 0$ which implies for any $0< s < r$
\[
|-G(r) - \gamma \ln (r) - (-G(s) -\gamma \ln s)| = \int_s^r \frac{\psi(t)}{t} dt \to 0
\]
as $r,s \to 0$. Hence, there exits the limits $\lim_{r\to 0} (-G(r) - \gamma \ln (r)) = -C_G$. Hence, we get from \eqref{eq:daoham3} that
\[
-G(r) -\gamma \ln r + C_G = \int_0^r \frac{\psi(s)}s ds.
\]
Let $H(r) = -\int_0^r \frac{\psi(s)}s ds$, we obtain
\[
G(r) = -\om_{n-1}^{-\frac1{n-1}} \ln r + C_G + H(r)
\]
by noticing that $\gamma = \om_{n-1}^{-\frac1{n-1}}$. From the definition of $H$, we have $H(r) =O((-\ln r)^{n-1} r^n)$ as $r \to 0$ and $H \in C_{loc}^{1,\al} (B)$ for any $\al \in (0,1)$.
\end{proof}

\subsection{A transformation of functions via the transplantation of Green functions}
Let us recall that the $n$-Green function with pole at $0$ of the operator $-\De_n$ in $B$ is given
\[
G_{\B^n}(x) = -\om_{n-1}^{-\frac1{n-1}} \ln |x|,
\]
i.e., $G_{\B^n}$ is the weak solution of the equation $-\De_n G_{\B^n} = \de_0$ in $\B^n$ and $G_{\B^n} =0$ on $\pa B$.

Let $u \in \Sigma$ be a given function and we define a new function $v$ in $\B^n$ by
\begin{equation}\label{eq:v}
v(r) = u(G^{-1}\circ G_{\B^n}(r))
\end{equation}
or equivalently
\[
u(r) = v(e^{-\om_{n-1}^{\frac1{n-1}} G(r)}).
\]
A simple computation shows
\[
u'(r) = -v'(e^{-\om_{n-1}^{\frac1{n-1}} G(r)})) e^{-\om_{n-1}^{\frac1{n-1}} G(r)}\om_{n-1}^{\frac1{n-1}} G'(r).
\]
Thus, we have
\begin{align*}
\int_{\B^n} |\na u|^n dx& =\om_{n-1}\int_0^1 |u'(r)|^n r^{n-1} dr\\
&= \om_{n-1}\int_0^1 |v'(e^{-\om_{n-1}^{\frac1{n-1}} G(r)})) e^{-\om_{n-1}^{\frac1{n-1}} G(r)}\om_{n-1}^{\frac1{n-1}} G'(r)|^n r^{n-1} dr.
\end{align*}
Making the change of variable $t =e^{-\om_{n-1}^{\frac1{n-1}} G(r)} $ and define
\[
a(t) = G^{-1}(-\om_{n-1}^{-\frac1{n-1}} \ln t).
\]
Since $G$ is strictly decreasing, then $a$ is strictly increasing, $a(0) =0$ and $a(1) =1$. Furthermore, $a \in C^1([0,1))$. From the change of variable above, we have $r = a(t)$ and $dr = -(G^{-1})'(-\om_{n-1}^{-\frac1{n-1}} \ln t)\om_{n-1}^{-\frac1{n-1}} t^{-1} dt$ and
\begin{align}\label{eq:grad}
\int_{\B^n} |\na u|^n dx&=-\om_{n-1}\int_0^1 |v'(t)|^n t^n \om_{n-1}^{\frac n{n-1}} |G'(a(t))|^n a(t)^{n-1}(G^{-1})'(-\om_{n-1}^{-\frac1{n-1}} \ln t) \om_{n-1}^{-\frac1{n-1}} t^{-1} dt\notag\\
&=\om_{n-1}\int_0^1 |v'(t)|^n t^{n-1} \om_{n-1} |G'(a(t))|^{n-1} a(t)^{n-1} dt\notag\\
&= \om_{n-1} \int_0^1 |v'(t)|^n t^{n-1} dt + \om_{n-1}\int_0^1 |v'(t)|^n t^{n-1} \Phi(t) dt,
\end{align}
with 
\[
\Phi(t) = \om_{n-1} |G'(a(t))|^{n-1} a(t)^{n-1} -1 >0,\quad \forall \, t\in (0,1)
\]
by \eqref{eq:Gchan}, here we used the equality $G'(G^{-1}(a)) (G^{-1})'(a) =1$ for the second equality. Note that
\begin{align*}%\label{eq:daohamPhi}
\Phi'(t) &= -\om_{n-1}\De_n G(a(t))\, a(t)^{n-1} a'(t)\notag\\
&=\lt(\frac{2(n-1)}n\rt)^n (-\ln t)^{n-1} \frac{a(t)^{n-1}}{(1 -a(t)^2)^n} \frac{1}{-\om_{n-1}^{\frac1{n-1}} G'(a(t)) t}
\end{align*}

In the other hand
\begin{align}\label{eq:u}
\lt(\frac{2(n-1)}n\rt)^n\int_{\B^n} \frac{|u|^n}{(1- |x|^2)^n} dx & =\lt(\frac{2(n-1)}n\rt)^n\om_{n-1} \int_0^1 \frac{|v(e^{-\om_{n-1}^{\frac1{n-1}} G(r)})|^n}{(1 -r^2)^n} r^{n-1} dr\notag\\
&= \lt(\frac{2(n-1)}n\rt)^n\om_{n-1} \int_0^1 v(t)^n \frac{a(t)^{n-1}}{(1 -a(t)^2)^n} \frac{dt}{-\om_{n-1}^{\frac1{n-1}} G'(a(t)) t}\notag\\
&=  \om_{n-1}\int_0^1 v(t)^n \frac{\Phi'(t)}{(-\ln t)^{n-1}} dt.
\end{align}

To continue, we need a Hardy type inequality as follows
\begin{lemma}
For any $v \in C_0^1([0,1))$ which is non-increasing, it holds
\begin{equation}\label{eq:Hardy}
\int_0^1 |v'(t)|^n t^{n-1} \Phi(t) dt \geq \int_0^1 v(t)^n \frac{\Phi'(t)}{(-\ln t)^{n-1}} dt.
\end{equation}
\end{lemma}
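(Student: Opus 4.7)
The plan is a single integration by parts on the right-hand side followed by H\"older and Young. The natural pairing for integration by parts is
\[
\int_0^1 v(t)^n \frac{\Phi'(t)}{(-\ln t)^{n-1}} dt = \Bigl[\frac{v(t)^n \Phi(t)}{(-\ln t)^{n-1}}\Bigr]_0^1 - \int_0^1 \Phi(t) \Bigl(\frac{v(t)^n}{(-\ln t)^{n-1}}\Bigr)' dt,
\]
so the first step is to check that the boundary term vanishes. At $t=1$ this is immediate from $v \in C_0^1([0,1))$, while at $t=0$ I would use the key identity $\Phi(t) = \int_{B_{a(t)}} V G^{n-1} dx$ coming from \eqref{eq:Gchan}. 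Combining this with Lemma \ref{Gfunction} (which gives $G(r) \sim \omega_{n-1}^{-1/(n-1)}(-\ln r)$ and $a(t)\sim c\,t$ as $t\to 0$), one finds $\Phi(t) = O(t^n(-\ln t)^{n-1})$, hence $\Phi(t)/(-\ln t)^{n-1} \to 0$ as $t \to 0^+$; since $v$ is bounded, the boundary contribution disappears.

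After differentiating the product, the identity becomes
\[
\int_0^1 v^n \frac{\Phi'}{(-\ln t)^{n-1}}\, dt = n\int_0^1 \frac{\Phi\, v^{n-1}|v'|}{(-\ln t)^{n-1}}\, dt \;-\; (n-1)\int_0^1 \frac{\Phi\, v^n}{t(-\ln t)^n}\, dt,
\]
where I have used $v'\le 0$ to write $-v' = |v'|$. Write $A := \int_0^1 |v'|^n t^{n-1}\Phi\, dt$ (which equals the LHS of \eqref{eq:Hardy}) and $B := \int_0^1 \Phi v^n /(t(-\ln t)^n)\, dt$. Applying H\"older's inequality with exponents $n$ and $n/(n-1)$ to the factorization
\[
\frac{\Phi v^{n-1}|v'|}{(-\ln t)^{n-1}} \;=\; \bigl(|v'|\, t^{(n-1)/n} \Phi^{1/n}\bigr)\cdot \frac{v^{n-1}\Phi^{(n-1)/n}}{t^{(n-1)/n}(-\ln t)^{n-1}},
\]
one obtains $\int_0^1 \Phi v^{n-1}|v'|/(-\ln t)^{n-1}\, dt \le A^{1/n}B^{(n-1)/n}$.

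Putting these together gives
\[
\int_0^1 v^n \frac{\Phi'}{(-\ln t)^{n-1}}\, dt \;\le\; n A^{1/n} B^{(n-1)/n} - (n-1) B,
\]
and Young's inequality $n\,xy \le x^n + (n-1) y^{n/(n-1)}$ applied with $x=A^{1/n}$, $y=B^{(n-1)/n}$ yields $n A^{1/n} B^{(n-1)/n} \le A + (n-1)B$, so the right-hand side above is bounded by $A$, which is exactly the LHS of \eqref{eq:Hardy}. The main technical obstacle is the boundary control at $t=0$; once the Green-function asymptotics from Lemma \ref{Gfunction} are in hand, the rest is an exact H\"older--Young chain with the sharp constants cancelling perfectly (in particular the negative term $-(n-1)B$ is precisely what is needed to absorb the slack in Young's inequality).
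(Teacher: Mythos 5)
Your argument is correct, and it is a genuinely different route from the one in the paper. The paper proves \eqref{eq:Hardy} by the ground-state substitution $v(t)=w(t)(-\ln t)$ (factoring out the radial profile of the $n$-harmonic Green function), then applies the pointwise convexity inequality $|a-b|^n\ge |b|^n-nab^{n-1}+|a|^n$ to $v'=w'(-\ln t)-w/t$, discards the manifestly nonnegative term $\int_0^1|w'|^n(-\ln t)^nt^{n-1}\Phi\,dt$, and integrates the exact-derivative cross term $(w^n\ln t)'$ by parts once; the boundary term there has a favorable sign essentially for free. You instead integrate the right-hand side by parts first and close the estimate with H\"older (exponents $n$ and $n/(n-1)$) followed by Young's inequality $nA^{1/n}B^{(n-1)/n}\le A+(n-1)B$, with the term $-(n-1)B$ produced by the integration by parts exactly absorbing the slack. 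These are the two classical dual mechanisms for Hardy-type inequalities and both yield the sharp constant; your version avoids the substitution and the pointwise expansion, at the cost of having to know that $B=\int_0^1 \Phi v^n/(t(-\ln t)^n)\,dt$ is finite (otherwise the final chain reads $\infty-\infty$) --- this does follow from the same asymptotics you invoke for the boundary term, namely $\Phi(t)=\int_{B_{a(t)}}VG^{n-1}dx=O(t^n(-\ln t)^{n-1})$ via \eqref{eq:Gchan} together with the compact support of $v$, but it should be stated explicitly. Your identification of $\Phi$ with $\int_{B_{a(t)}}VG^{n-1}dx$ and the resulting decay $\Phi(t)/(-\ln t)^{n-1}=O(t^n)\to 0$ are both correct (and in fact the boundary term you kill has the sign $-\lim_{t\to 0^+}v^n\Phi/(-\ln t)^{n-1}\le 0$, so even its mere boundedness would suffice for the inequality). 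One last small point: both proofs use $v\ge 0$ (you need it to treat $v^{n-1}$ as nonnegative in the H\"older step), which holds because $v$ is non-increasing with compact support in $[0,1)$; worth a sentence.
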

\begin{proof}
Let $v(t) = w(t) (-\ln t)$. We have $v'(t) = w'(t) (-\ln t) -\frac{w(t)} t$. Notice that $v'(t) \leq 0$. Using the simple inequality
\[
|a-b|^n \geq |b|^n - n b^{n-1} a +  |a|^n,
\]
for any $b\geq 0$ and $a - b \leq 0$, we get
\[
|v'(t)|^n \geq \frac{w(t)^n}{t^n} + n \frac{w(t)^{n-1} w'(t)}{t^{n-1}} \ln t + |w'(t)|^n (-\ln t)^n = (w(t)^n \ln t)' t^{1-n} + |w'(t)|^n (-\ln t)^n.
\]
Using integration by parts, we get
\begin{align*}
\int_0^1 |v'(t)|^n t^{n-1} \Phi(t) dt&\geq \int_0^1 (w(t)^n \ln t)' \Phi(t) dt + \int_0^1 |w'(t)|^n (-\ln t)^n t^{n-1} \Phi(t) dt\\
&\geq \int_0^1 w(t)^n (-\ln t) \Phi'(t) dt\\
&= \int_0^1 v(t)^n \frac{\Phi'(t)}{(-\ln t)^{n-1}} dt,
\end{align*}
as desired.
\end{proof}

Combining \eqref{eq:grad}, \eqref{eq:u} and \eqref{eq:Hardy}, we arrive
\begin{equation}\label{eq:key}
\mathcal H (u)^n \geq \int_{\B^n} |\na v|^n dx.
\end{equation}
The inequality \eqref{eq:key} is the key in our proof of Theorem \ref{MAIN}.

\section{Proof of Theorem \ref{MAIN} and Theorem \ref{MST}}
With the estimate \eqref{eq:key} at hand, we are ready to prove the inequality \eqref{eq:SHMT} in Theorem \ref{MAIN}.
\begin{proof}[Proof of Theorem \ref{MAIN}]
As mentioned in subsection \S2.1, it is enough to prove Theorem \ref{MAIN} for function in $\Sigma$. For any $u \in \Sigma$ with $\mathcal H(u) \leq 1$, we define the new function $v$ by \eqref{eq:v}. Notice that $v \in W^{1,n}_0(\B^n)$ and by \eqref{eq:key} we have $\int_{\B^n} |\na v|^n dx \leq 1$.

Moreover, by the simple calculations, we have
\begin{align*}
\int_{\B^n} e^{(1-\frac \beta n)\al_n |u|^{\frac n{n-1}}} |x|^{-\beta} dx &= \om_{n-1} \int_0^1 e^{(1-\frac\beta n)\al_n v(e^{-\om_{n-1}^{\frac1{n-1}} G(r)}))^{\frac n{n-1}}} r^{n-\beta -1} dr\\
&=\om_{n-1} \int_0^1 e^{(1-\frac\be n)\al_n v(t)^{\frac n{n-1}}} t^{n-\beta -1}\lt(\frac{a(t)}{t}\rt)^{n-\beta} \frac{1}{-\om_{n-1}^{\frac 1{n-1}} G'(a(t)) a(t)} dt\\
&= \om_{n-1} \int_0^1 e^{\al_n v(t)^{\frac n{n-1}}} t^{n-\beta -1} \Psi(t) dt,
\end{align*}
with
\[
\Psi(t) =  \frac{1}{-\om_{n-1}^{\frac 1{n-1}} G'(a(t)) a(t)}\frac{a(t)^{n-\beta}}{t^{n-\beta}}.
\]
By \eqref{eq:Gchan}, we have
\[
-\om_{n-1}^{\frac 1{n-1}}G'(a(t)) a(t) > 1,\quad\forall \, t\in (0,1).
\]
From the definition of $a(t)$, we have $G'(a(t)) a'(t) = -\om_{n-1}^{-\frac1{n-1}} t^{-1}$, hence
\[
\lt(\frac{a(t)}t\rt)' = \frac{a'(t) t - a(t)}t^2 = \frac{-1 - \om_{n-1}^{\frac1{n-1}}G'(a(t)) a(t)}{\om_{n-1}^{\frac1{n-1}}t^2 G'(a(t)} < 0,\quad\forall \, t\in (0,1)
\]
since $G' < 0$. Then the function $a(t)/t$ is strictly decreasing. Furthermore, from \eqref{eq:decompG} we have 
\[
G(r) = -\om_{n-1}^{-\frac 1{n-1}} \ln r + C_G + H(r)
\] 
which implies
\[
\frac{a(t)} {t} = e^{\om_{n-1}^{\frac1{n-1}} (C_G + H(a(t)))}.
\]
So, we have
\[
\frac{a(t)}{t} < \lim_{t\to 0} \frac{a(t)}{t} = e^{\om_{n-1}^{\frac1{n-1}} C_G},\quad\forall \, t\in (0,1)
\]
since $\lim_{t\to 0} a(t) =0$. Therefore,  it holds
\[
\int_{\B^n} e^{(1-\frac \beta n)\al_n |u|^{\frac n{n-1}}} |x|^{-\beta} dx \leq e^{(1-\frac\be n) \al_n C_G}  \int_{\B^n} e^{(1-\frac\be n) |v|^{\frac n{n-1}}} |x|^{-\beta} dx.
\]
In the light of the classical singular Moser--Trudinger inequality \eqref{eq:classicalSMT} in $\B^n$, it holds
\begin{align*}
\int_{\B^n} e^{(1-\frac \beta n)\al_n |u|^{\frac n{n-1}}} |x|^{-\beta} dx &\leq e^{(1-\frac\be n) \al_n C_G}  \sup_{w \in W^{1,n}_0(B), \int_{\B^n} |\na w|^n dx \leq 1}\int_{\B^n} e^{(1-\frac\be n)\al_n |w|^{\frac n{n-1}}} |x|^{-\beta} dx\\
&=:C(n,\beta)\\
&< \infty,
\end{align*}
for any $u\in \Sigma$ with $\mathcal H(u) \leq 1$. This finishes the proof of Theorem \ref{MAIN}.

\end{proof}

We next prove Theorem \ref{MST}.

\begin{proof}[Proof of Theorem \ref{MST}]
Again, by the standard rearrangement argument in the hyperbolic spaces from subsection \S2.1, it is enough to prove the inequality \eqref{eq:MST} for function $u \in \Sigma$ with $\mathcal H(u) \leq 1$. For such a function $u$, we have
\[
u(r) \leq C_{n,p} (1-r^2)^{\frac{n-1}p},\quad \forall\, r\in (1/2,1),
\]
here $p > n$ is any number and $C_{n,p}$ depends only on $n$ and $p$ (see \cite[Lemma $5.3$]{MST}). Hence, 
\[
e^{(1-\frac\be n) \al_n |u|^{\frac n{n-1}}} - P_{n-1}\Big(\Big(1-\frac\be n\Big)\al_n |u|^{\frac n{n-1}}\Big) \leq \tilde C_{n,p,\beta} (1-r^2)^{\frac{n^2}p}, \quad \forall\, r\in (1/2,1),
\]
here $p > n$ is any number and $\tilde C_{n,p,\be}$ depends only on $n, p$ and $\be$. Choosing $p$ such that $n < p < \frac{n^2}{n-1}$ hence $\frac{n^2}p - n + 1 >0$. By splitting the integral, we have
\begin{align*}
&\int_{\B^n} \frac{e^{(1-\frac\be n) \al_n |u|^{\frac n{n-1}}} - P_{n-1}\Big(\Big(1-\frac\be n\Big)\al_n |u|^{\frac n{n-1}}\Big)}{(1 -|x|^2)^n} |x|^{-\beta} dx\\
& \qquad= \int_{\{|x| \leq \frac12\}} \frac{e^{(1-\frac\be n) \al_n |u|^{\frac n{n-1}}} - P_{n-1}\Big(\Big(1-\frac\be n\Big)\al_n |u|^{\frac n{n-1}}\Big)}{(1 -|x|^2)^n} |x|^{-\beta} dx  \\
&\qquad\quad + \int_{\{\frac12 < |x| < 1\}} \frac{e^{(1-\frac\be n) \al_n |u|^{\frac n{n-1}}} - P_{n-1}\Big(\Big(1-\frac\be n\Big)\al_n |u|^{\frac n{n-1}}\Big)}{(1 -|x|^2)^n} |x|^{-\beta} dx\\
&\qquad\leq \Big(\frac 43\Big)^n \int_{\{|x| \leq \frac12\}} e^{(1-\frac\be n) \al_n |u|^{\frac n{n-1}}} |x|^{-\beta} dx \\
&\qquad \quad + \tilde C_{n,p,\beta} \int_{\{\frac12 < |x| < 1\}} (1- |x|^2)^{\frac{n^2}p -n} dx \\
&\qquad \leq \Big(\frac 43\Big)^n \int_{\B^n} e^{(1-\frac\be n) \al_n |u|^{\frac n{n-1}}} |x|^{-\beta} dx \\
&\qquad\quad + \tilde C_{n,p,\beta} \int_{\{\frac12 < |x| < 1\}} (1- |x|^2)^{\frac{n^2}p -n} dx\\
&\qquad =:\tilde C(n,\beta)\\
&\qquad < \infty,
\end{align*}
here we use the inequality \eqref{eq:SHMT} in Theorem \ref{MAIN} and the fact $\frac{n^2}p -n + 1 >0$. This completes the proof of Theorem \ref{MST}.
\end{proof}

Finally, we make some further comments on our approach in this paper to the other improvement of the singular Moser--Trudinger inequality \eqref{eq:classicalSMT} in $\B^n$. Let $V:\B^n \to (0,\infty)$ be a radially symmetric, continuous potential such that $(1-|x|^2)^n V(x)$ is non-increasing in $|x|$ (this assumption enables us to apply the rearrangement argument in the hyperbolic spaces). We further assume that the functional
\[
Q_V(u) = \int_{\B^n} |\na u|^n dx - \int_{\B^n} V |u|^n dx 
\]
has a spectral gap (or strictly positive) in $\B^n$ in the sense of \cite[Definition $1.3$]{PT}. As in subsection \S2.2, we can prove that the equation
\[
-\De_n u - V u^{n-1} = \de_0,
\]
in the distribution sense in $\B^n$ has a unique radially symmetric, strictly decreasing (in $|x|$), positive solution $G$. Hence, there exists $a = \lim_{r\to 1} G(r) \geq 0$. We show that $a =0$. Indeed, from the proof of Theorem $5.4$ in \cite{PT}, we see that $G$ is locally uniform limit in $\B^n \setminus\{0\}$ of the sequence $G_N$ which solves the equation $Q_V'(G_N) = 0$ in $B_{1-\frac1N} \setminus \{0\}$ and satisfies the condition $G_N = 0$ on $\pa B_{1-\frac1N}$ and 
\[
\lim_{x\to 0} \frac{G_N(x)}{-\ln |x|} = \om_{n-1}^{-\frac1{n-1}}.
\]
If $a >0$, denote $H(x) = G(x) -a$. We have
\[
-\Delta_n H - V H^{n-1} = -\Delta_n G - V H^{n-1} = V(G^{n-1} -H^{n-1}) \geq 0
\]
in $\{x\, :\, \frac12 < |x| < 1\}$. Notice that $H >0$ in $\B^n$ and $G_N$ is uniformly bounded in $\pa B_{\frac12}$ so we can apply the comparison principle (see \cite[Theorem $2.2$]{PT} or \cite[Theorem $5$]{GS}) to get that $G_N(x) \leq C H(x)$ for any $\frac12 \leq |x| <1$, and any $N \geq 2$ and for some constant $C \geq 1$. Letting $N\to \infty$ we get $G(x) \leq C H(x)$ for some $C \geq 1$. Letting $|x|\to 1$ we obtain $a =0$. Therefore, the function $G: (0,1) \to (0,\infty)$ is a bijection. Furthermore, by the same arguments in subsection \S2.3, we can prove that the function $G$ has the form
\[
G(x) = -\om_{n-1}^{-\frac1{n-1}} \ln |x| + C_G + \psi(x)
\]
with $\psi \in C_{loc}^{1,\al}(\B^n)$ and $\psi(r) = O(r^{1+\al})$ as $r\to 0$ for any $\al \in (0,1)$. Now, we can follow the proof of Theorem \ref{MAIN} to prove the following inequality
\begin{equation}\label{eq:general}
\sup_{u\in W_0^1(\B^n), Q_V(u) \leq 1} \int_{\B^n} e^{(1-\frac\beta n) \al_n |u|^{\frac n{n-1}}} |x|^{-\beta} dx < \infty,
\end{equation}
for any $0\leq \beta < n$. In dimension two, the inequality \eqref{eq:general} was considered by Tintarev \cite{Tin} when $\beta =0$. A special example of the potential $V$ which satisfies our assumptions in $V(x) = \alpha \in [0,\lam_{1,n}(\B^n))$ where
\[
\lam_{1,n}(\B^n) = \inf\lt\{\int_{\B^n} |\na u|^n dx\, :\, u\in W^{1,n}_0(\B^n); \, \int_{\B^n} |u|^n dx =1\rt\}.
\]
In this case, we obtain the results in \cite{Nguyen,NguyenT} from \eqref{eq:general}
\[
\sup_{u\in W^{1,n}_0(\B^n), \|\na u\|_{L^n(\B^n)}^n - \alpha \|u\|_{L^n(\B^n)}^n \leq 1} \int_{\B^n} e^{\al_n(1-\frac\be n) |u|^{\frac{n}{n-1}}} |x|^{-\beta} dx < \infty,\quad \beta \in [0,n).
\]
Another example is the improvement of the singular Hardy--Moser--Trudinger inequality \eqref{eq:SHMT}. Let
\[
\lam_1 = \inf\lt\{\mathcal H(u)\,:\, u\in C_0^\infty(\B^n); \, \int_{\B^n} |u|^n dx =1\rt\}.
\]
The Poincar\'e--Sobolev inequality \eqref{eq:HardySobolevB} implies that $\lam_1 >0$. Therefore, for any $\lam \in [0,\lam_1)$ the potential $V(x) = \lt(\frac{2(n-1)}n\rt)^n (1-|x|^2)^{-n} + \lam$ satisfies our assumptions. Hence, we obtain the following improvement of the singular Hardy--Moser--Trudinger inequality \eqref{eq:SHMT}
\begin{equation}\label{eq:improvedSHMT}
\sup_{u\in W^{1,n}_0(\B^n), \mathcal H(u) -\lam \|u\|_{L^n(\B^n)}^n \leq 1} \int_{\B^n} e^{\al_n(1-\frac\be n) |u|^{\frac{n}{n-1}}} |x|^{-\beta} dx < \infty,\quad \beta \in [0,n).
\end{equation}
In dimension two, the inequality \eqref{eq:improvedSHMT} was established by Yang and Zhu \cite{YZ} for $\beta =0$ and by Hou \cite{Hou} for $\beta \in (0,n)$ by exploiting the blow-up analysis method. Following the proof of Theorem \ref{MST} and using the inequality \eqref{eq:improvedSHMT}, we obtain the following improvement of \eqref{eq:MST}: for any $\lam \in [0,\lam_1)$ and $\beta \in [0,n)$ it holds
\[
\sup_{u\in W_0^{1,n}(\B^n), \mathcal H(u)-\lam \|u\|_{L^n(\B^n)}^n \leq 1} \int_{\B^n} \frac{e^{(1-\frac\be n) \al_n |u|^{\frac n{n-1}}} - P_{n-1}\Big(\Big(1-\frac\be n\Big)\al_n |u|^{\frac n{n-1}}\Big)}{(1 -|x|^2)^n} |x|^{-\beta} dx < \infty.
\]

%\section*{Acknowledgments}


\begin{thebibliography}{9999}
\bibitem{AT00}
S. Adachi, and K. Tanaka, \emph{Trudinger type inequalities in $\R^N$ and their best exponents\text}, Proc. Amer. Math. Soc., {\bf 128} (2000), no. 7, 2051--2057. 

\bibitem{A}
D. R. Adams, \emph{A sharp inequality of J. Moser for higher order derivatives\text}, Ann. of Math., {\bf 128} (2) (1988) 385-398.

\bibitem{AD}
Adimurthi, and O. Druet, \emph{Blow--up analysis in dimension $2$ and a sharp form of Trudinger--Moser inequality\text}, Comm. Partial Differential Equations, {\bf 29} (2004), no. 1-2, 295--322.

\bibitem{AdiSan}
Adimurthi, and K. Sandeep, \emph{A singular Moser--Trudinger embedding and its applications\text}, NoDEA Nonlinear Differential Equations Appl., {\bf 13} (2007), no. 5-6, 585--603.

\bibitem{AT}
Adimurthi, and C. Tintarev, \emph{On a version of Trudinger--Moser inequality with M\"obius shift invariance\text}, Calc. Var. Partial Differential Equations, {\bf 39} (2010), no. 1-2, 203--212.

\bibitem{AY}
Adimurthi, and Y. Yang, \emph{An interpolation of Hardy inequality and Trundinger--Moser inequality in $\R^N$ and its applications\text}, Int. Math. Res. Not. IMRN 2010, no. 13, 2394--2426. 

\bibitem{Ba}
A. Baernstein, \emph{A unified approach to symmetrization\text}, in: Partial Differential Equations of Elliptic Type, Cortona, 1992, in: Sympos. Math., vol. XXXV, 1994, pp. 47--91.

\bibitem{BFL}
R. D. Benguria, R. L. Frank, and M. Loss, \emph{The sharp constant in the Hardy--Sobolev--Maz'ya inequality in the three dimensional upper half--space\text}, Math. Res. Lett., {\bf 15} (2008), no. 4, 613--622.

\bibitem{Brock}
F. Brock, \emph{A general rearrangement inequality \`a la Hardy--Littlewood\text}, J. Inequal. Appl., {\bf 5} (2000) 309--320.

\bibitem{CC}
L. Carleson, and S. Y. A. Chang, \emph{On the existence of an extremal function for an inequality of J. Moser\text}, Bull. Sci. Math., {\bf 110} (1986) 113-127.

\bibitem{CR15}
G. Csat\'o, and P. Roy, \emph{Extremal functions for the singular Moser-Trudinger inequality in $2$ dimensions\text}, Calc. Var. Partial Differential Equations, {\bf 54} (2015), no. 2, 2341--2366. 

\bibitem{CNR}
G. Csat\'o, V. H. Nguyen, and P. Roy, \emph{Extremals for the singular Moser--Trudinger inequality via $n$-harmonic transplantation\text}, arXiv:1801.03932v3.

\bibitem{CR16}
G. Csat\'o, and P. Roy, \emph{Singular Moser--Trudinger inequality on simply connected domains\text}, Comm. Partial Differential Equations, {\bf 41} (2016), no. 5, 838--847.

\bibitem{DeFOR}
D. G. De Figueiredo, J. M. do \'O, and B. Ruf, \emph{On an inequality by N. Trudinger and J. Moser and related elliptic equations\text}, Comm. Pure Appl. Math., {\bf 55} (2002), no. 2, 135--152.

\bibitem{F}
M. Flucher, \emph{Extremal functions for the Trudinger-Moser inequality in $2$ dimensions\text} 
Comment. Math. Helv., {\bf 67} (1992) 471--497.

\bibitem{GS}
J. Garcia--Meli\'an, and J. Sabina de Lis, \emph{Maximum and comparison principles for operators involving the $p$-Laplacian\text}, J. Math. Anal. Appl., {\bf 218} (1998), no. 1, 49--65.

\bibitem{Hou}
S. Hou, \emph{Extremal functions for a singular Hardy--Moser--Trudinger inequality\text}, preprint, arXiv:1908.03982v1.

\bibitem{LL}
N. Lam, and G. Lu, \emph{A new approach to sharp Moser--Trudinger and Adams type inequalities: A rearrangement--free argument\text}, J. Differential Equations, {\bf 255} (2013) 298--325.

\bibitem{Li2001}
Y. Li, \emph{Moser--Trudinger inequaity on compact Riemannian manifolds of dimension two\text}, J. Partial Differ. Equa., {\bf 14} (2001) 163-192.

\bibitem{Li2005}
Y. Li, \emph{Extremal functions for the Moser-Trudinger inequalities on compact Riemannian manifolds\text}, Sci. China Ser. A, {\bf 48} (2005) 618--648.

\bibitem{LR}
Y. Li, and B. Ruf, \emph{A sharp Trudinger-Moser type inequality for unbounded domains in $\R^n$\text}, Indiana Univ. Math. J., {\bf 57} (2008) 451--480.

\bibitem{LLY}
J. Li, G. Lu, and Q. Yang, \emph{Fourier analysis and optimal Hardy--Adams inequalities on hyperbolic spaces of any even dimension\text}, Adv. Math., {\bf 333} (2018) 350--385.

\bibitem{Lin1996}
K. Lin, \emph{Extremal functions for Moser's inequality\text}, Trans. Amer. Math. Soc., {\bf 348} (1996) 2663--2671. 

\bibitem{LY16}
G. Lu, and Q. Yang, \emph{A sharp Trudinger--Moser inequality on any bounded and convex planar domain\text}, Calc. Var. Partial Differential Equations, {\bf 55} (2016), no. 6, Art. 153, 16 pp.

\bibitem{LY}
G. Lu, and Q. Yang, \emph{Sharp Hardy--Adams inequalities for bi--Laplacian on hyperbolic space of dimension four\text}, Adv. Math., {\bf 319} (2017) 567--598.

\bibitem{MS}
G. Mancini, and K. Sandeep, \emph{Moser-Trudinger inequality on conformal discs\text}, Commun. Contemp. Math., {\bf 12} (2010), no. 6, 1055--1068.

\bibitem{MST}
G. Mancini, K. Sandeep, and C. Tintarev, \emph{Trudinger--Moser inequality in the hyperbolic space $\mathbb H^n$\text}, Adv. Nonlinear Anal., {\bf 2} (2013), no. 3, 309--324.

\bibitem{Marcus}
M. Marcus, V.J. Mizel, and Y. Pinchover, \emph{On the best constant for Hardy’s inequality in $\R^n$\text}, Trans. Amer. Math. Soc., {\bf 350} (1998), no. 8, 3237--3255.

\bibitem{Martinazzi}
L. Martinazzi, \emph{Fractional Adams--Moser--Trudinger type inequalities\text}, Nonlinear Anal., {\bf 127} (2015) 263--278.

\bibitem{Mat}
T. Matskewich, and P. Sobolevskii, \emph{The best possible constant in a generalized Hardy’s inequality for convex domains in $\R^n$\text}, Nonlinear Anal., {\bf 28} (1997), no. 9, 1601--1610.

\bibitem{Maz'ya}
V. G. Maz'ya, \emph{Sobolev spaces\text}, Springer Verlag, Berlin, New York, 1985.

\bibitem{M}
J. Moser, \emph{A sharp form of an inequality by N. Trudinger\text}, Indiana Univ. Math. J., {\bf 20} (1970/71) 1077-1092.

\bibitem{NgoNguyen2016}
Q. A. Ngo, and V. H. Nguyen, \emph{Sharp Adams--Moser--Trudinger type inequalities in the hyperbolic spaces\text}, to appear Revista Matem\'atica Iberoamericana, arXiv:1606.07094.

\bibitem{NguyenHS}
V. H. Nguyen, \emph{The sharp Poincar\'e--Sobolev type inequalities in the hyperbolic spaces $\mathbb H^n$\text}, J. Math. Anal. Appl., {\bf 462} (2018), no. 2, 1570--1584.

\bibitem{NguyenT}
V. H. Nguyen, \emph{Improved Moser--Trudinger inequality of Tintarev type in dimension $n$ and the existence of its extremal functions\text}, Ann. Global Anal. Geom., {\bf 54} (2018), no. 2, 237--256.

\bibitem{NguyenMT}
V. H. Nguyen, \emph{Improved Moser--Trudinger type inequalities in the hyperbolic space $\mathbb H^n$\text}, Nonlinear Anal., {\bf 168} (2018) 67--80.

\bibitem{Nguyen}
V. H. Nguyen, \emph{Improved singular Moser--Trudinger inequalities and their extremal functions\text}, to appear in Potential Analysis.

\bibitem{Nguyennew}
V. H. Nguyen, \emph{The Hardy--Moser--Trudinger inequality via the transplantation of Green functions\text}, minor revision in Communications on Pure and Applied Analysis.

\bibitem{PT}
Y. Pinchover, and K. Tintarev, \emph{Ground state alternative for $p-$Laplacian with potential term\text}, Calc. Var. and Partial Differential Equations, {\bf 28} (2007) 179--201.

\bibitem{P}
S. I. Poho${\rm \check{z}}$aev, \emph{On the eigenfunctions of the equation $\Delta u + \lambda f(u) = 0$\text}, (Russian), Dokl. Akad. Nauk. SSSR, {\bf 165} (1965) 36-39.

\bibitem{R}
B. Ruf, \emph{A sharp Trudinger-Moser type inequality for unbounded domains in $\R^2$\text}, J. Funct. Anal., {\bf 219} (2005) 340--367.

\bibitem{TT}
A. Tertikas and C. Tintarev, \emph{On existence of minimizers for the Hardy--Sobolev--Maz'ya inequality\text}, Ann. Mat. Pura Appl. (4), {\bf 186} (2007), no. 4, 645--662.

\bibitem{Serrin}
J. Serrin, \emph{Local behavior of solutions of quasilinear equations\text}, Acta Math., {\bf 111} (1964) 247--302.

\bibitem{Tin}
C. Tintarev, \emph{Trudinger--Moser inequality with remainder terms\text}, J. Funct. Anal., {\bf 266} (2014) 55--66.

\bibitem{Tol}
P. Tolksdorf, \emph{Regularity for a more general class of quasilinear elliptic equations\text}, J. Differential Equations, {\bf 51} (1984), no. 1, 126--150.

\bibitem{T}
N. S. Trudinger, \emph{On imbedding into Orlicz spaces and some applications\text}, J. Math. Mech., {\bf 17} (1967) 473-483.

\bibitem{WangYe}
G. Wang, and D. Ye, \emph{A Hardy--Moser--Trudinger inequality\text} Adv. Math., {\bf 230} (2012), no. 1, 294--320.

\bibitem{W19a}
X. Wang, \emph{Improved Hardy--Adams inequality on hyperbolic space of dimension four\text}, Nonlinear Anal., {\bf 182} (2019) 45--56.

\bibitem{W19b}
X. Wang, \emph{Singular Hardy--Moser--Trudinger inequality and the existence of extremals on the unit disc\text}, Commun. Pure Appl. Anal., {\bf 18} (2019) 2717--2733.

%\bibitem{Wang}
%X. Wang, \emph{Singular Hardy--Trudinger--Moser inequality and the existence of extremals on the unit disc\text}, Commun. Pure Appl. Anal., {\bf 18} (2019), no. 5, 2717--2733.

\bibitem{Yang06}
Y. Yang, \emph{A sharp form of Moser--Trudinger inequality in high dimension\text}, J. Funct. Anal., {\bf 239} (2006), no. 1, 100--126.

\bibitem{Yang}
Y. Yang, \emph{A sharp form of the Moser--Trudinger inequality on a compact Riemannian surface\text}, Trans. Amer. Math. Soc., {\bf 359} (2007), no. 12, 5761--5776.

\bibitem{YSK}
Q. Yang, D. Su and Y. Kong, \emph{Sharp Moser--Trudinger inequalities on Riemannian manifolds with negative curvature\text}, Ann. Mat. Pura Appl., {\bf 195} (2016) 459--471.

\bibitem{YZ}
Y. Yang, and X. Zhu, \emph{An improved Hardy--Trudinger--Moser inequality\text}, Ann. Global Anal. Geom., {\bf 49} (2016), no. 1, 23--41.

\bibitem{YZsingular}
Y. Yang, and X. Zhu, \emph{Blow-up analysis concerning singular Trudinger--Moser inequalities in dimension two\text}, J. Funct. Anal., {\bf 272} (2017), no. 8, 3347--3374.

\bibitem{Y}
V. I. Yudovi${\rm \check{c}}$, \emph{Some estimates connected with integral operators and with solutions of elliptic equations\text}, (Russian), Dokl. Akad. Nauk. SSSR, {\bf 138} (1961) 805-808.


\end{thebibliography}
\end{document}